\documentclass{amsart}

\usepackage[english]{babel}
\usepackage{csquotes} 
\usepackage{graphicx} 
\usepackage{amsmath, amsfonts, amssymb, amsthm}
\usepackage{mathrsfs} 
\usepackage{quiver} 

\usepackage{enumitem}
\setlist[itemize]{noitemsep, topsep=-3pt}
\setlist[enumerate]{noitemsep, topsep=-3pt}

\usepackage[
backend=biber,
style=alphabetic,
sorting=nyt,
maxbibnames=99
]{biblatex}

\title{When is the matroid Schubert variety $\mathbb{Q}$-Gorenstein?}
\author{Townsend Porcher}

\theoremstyle{plain}
\newtheorem{theorem}{Theorem}[section]
\newtheorem{corollary}[theorem]{Corollary}
\newtheorem{lemma}[theorem]{Lemma}
\newtheorem{proposition}[theorem]{Proposition}
\theoremstyle{definition}
\newtheorem{definition}[theorem]{Definition}
\newtheorem{example}[theorem]{Example}
\newtheorem{remark}[theorem]{Remark}

\newtheorem{opchowring}[theorem]{Operational Chow Ring}

\newcommand{\rk}{\operatorname{rk}}

\newcommand{\cl}{\operatorname{cl}}
\newcommand{\Cl}{\operatorname{Cl}}
\newcommand{\CaCl}{\operatorname{CaCl}}
\newcommand{\Pic}{\operatorname{Pic}}
\newcommand{\proj}{\operatorname{proj}}
\newcommand{\id}{\operatorname{id}}

\newcommand{\Span}{\operatorname{span}}
\newcommand{\MW}{\operatorname{MW}}

\makeatletter
\newcommand*{\textoverline}[1]{$\overline{\hbox{#1}}\m@th$}
\makeatother

\begin{document}

\begin{abstract}
Let $E$ be a finite set, and let $V\subseteq \mathbb{C}^E$ be a linear subspace that is not contained in any coordinate hyperplane. The closure of $V$ in the product of projective lines $(\mathbb{P}^1)^E$ is a singular variety $Y_V$ known as the matroid Schubert variety (or arrangement Schubert variety). We use operational Chow cohomology to prove that every line bundle on $Y_V$ is the restriction of a line bundle on $(\mathbb{P}^1)^E.$ We then give combinatorial characterizations of when $Y_V$ is Gorenstein and $\mathbb{Q}$-Gorenstein, respectively. We provide examples of linear subspaces $V$ such that $Y_V$ is Gorenstein but not smooth, $\mathbb{Q}$-Gorenstein but not Gorenstein, and not $\mathbb{Q}$-Gorenstein, respectively.
\end{abstract}

\maketitle

\section{Introduction}

Let $E$ be a finite set, and let $V \subseteq \mathbb{C}^E$ be an $r$-dimensional linear subspace such that $V$ is not contained in any coordinate hyperplane of $\mathbb{C}^E$. The embedding of $\mathbb{C}$ into $\mathbb{P}^1$ by adding a point at infinity induces an embedding of $\mathbb{C}^E$ into $(\mathbb{P}^1)^E$. The closure of $V$ in $(\mathbb{P}^1)^E$, denoted by $Y_V$, is called the \textit{matroid Schubert variety} of $V$. The matroid Schubert variety was introduced in \cite{ardila_boocher}. It is not a Schubert variety in the classical sense, but bears a similar name due to its geometric similarities (cf. \cite[Section 1]{singular_hodge_theory}). Studying the geometry of the matroid Schubert variety finds applications in matroid theory; notably, in \cite{huh_wang}, it plays a central role in resolving the realizable case of the Dowling-Wilson conjecture.

Many geometric invariants of the matroid Schubert variety $Y_V$ are actually combinatorial invariants of the matroid of the linear subspace $V$ (cf. \cite[Theorem 1.1]{ardila_boocher}). The \textit{matroid} of $V,$ denoted by $M_V$, is the rank-$r$ matroid on $E$ whose collection of independent sets is \[\{S \subseteq E : \text{the linear map }V \hookrightarrow \mathbb{C}^E \xrightarrow{\proj_S} \mathbb{C}^S \text{ is surjective}\}.\] 
The condition that $V$ is not contained in any coordinate hyperplane of $\mathbb{C}^E$ is equivalent to requiring that $M_V$ is a loopless matroid.

The matroid Schubert variety is singular except when $V = \mathbb{C}^E$  \cite[Corollary 3.9]{singular_thesis}. In this paper, we give combinatorial characterizations of when the matroid Schubert variety is Gorenstein and $\mathbb{Q}$-Gorenstein, respectively. The following theorem is proven in Section 4.

\begin{theorem}
\label{thm:gorenstein_condition}
    The matroid Schubert variety $Y_V$ is Gorenstein (resp. $\mathbb{Q}$-Gorenstein) if and only if there exists
    a weight function $w:E\to \mathbb{Z}$ (resp. $w:E\to \mathbb{Q}$) that satisfies
    \[\sum_{e \in C} w(e) = -2\]
    for all cocircuits $C$ of $M_V$.
\end{theorem}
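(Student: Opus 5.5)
We plan to compute the canonical divisor of $Y_V$ as a Weil divisor and then use the abstract's assertion that every line bundle on $Y_V$ is restricted from $(\mathbb{P}^1)^E$. This description of $\Pic(Y_V)$ is proved via operational Chow cohomology earlier in the paper. We use the known stratification of $Y_V$ (cf. \cite{ardila_boocher}): it is a disjoint union of affine spaces indexed by flats $F$ of $M_V$. The stratum for $F$ consists of the points with coordinates infinite exactly on $E\setminus F$, and it is isomorphic to the image of $V$ in $\mathbb{C}^F$. Consequently the codimension-one strata correspond to the hyperplanes $H$ of $M_V$, equivalently to the cocircuits $C=E\setminus H$. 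Let $D_C$ denote the closure of the corresponding stratum. We expect $\Cl(Y_V)$ to be free on the classes $[D_C]$ modulo principal divisors. Since $Y_V$ is normal, and in fact Cohen--Macaulay by \cite{ardila_boocher}, Gorenstein is equivalent to $K_{Y_V}$ being Cartier, and $\mathbb{Q}$-Gorenstein to some multiple of it being Cartier.

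First, we compute $K_{Y_V}$. The open stratum $V\cong\mathbb{C}^r$ has the nowhere-vanishing top form $\omega$ obtained by restricting coordinates from a basis. We compute its order of vanishing along each $D_C$. Near a general point of $D_C$ we take a basis $B$ of $M_V$ with exactly one element $c\in C$ and use the local coordinate $1/x_c$; the coordinates $x_b$ for $b\in B\setminus\{c\}$ remain finite. Writing $\omega$ in the variable $u=1/x_c$ gives $dx_c=-u^{-2}du$, so $\omega$ has a pole of order $2$ along each $D_C$. Hence
\[K_{Y_V}=-2\sum_{C}D_C.\]
This local computation needs care, namely that $Y_V$ is smooth at the generic point of $D_C$ with these coordinates. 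We will check this directly from the equations of $V$: the coordinates $x_e$ for $e\in C$ are linear in the basis coordinates, and they all go to infinity together.

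Second, we compute the restriction of $\mathcal{O}(\sum_e a_e\{x_e=\infty\})$ for $a\in\mathbb{Z}^E$. The divisor $\{x_e=\infty\}$ pulls back to $\sum_{C\ni e}D_C$, with multiplicity one along each such $D_C$ by the same local coordinate computation. So the restricted divisor is $\sum_C\bigl(\sum_{e\in C}a_e\bigr)D_C$. By the Picard result, every Cartier divisor is linearly equivalent to one of this form. We therefore need to decide when $-2\sum_C D_C$ (resp. a multiple of it) is linearly equivalent to $\sum_C(\sum_{e\in C}a_e)D_C$ for some $a$.

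The main obstacle is controlling linear equivalence in $\Cl(Y_V)$, i.e.\ showing that the boundary classes $[D_C]$ are linearly independent. Once that holds, the equivalence above is an equality of coefficients, and the weight function is $w=a$. For independence we plan the following. The complement of $\bigcup_C D_C$ is the affine space $V$, whose class group is trivial and whose units are constants. The localization sequence
\[\mathbb{C}[V]^\times/\mathbb{C}^\times\to\bigoplus_C\mathbb{Z}D_C\to\Cl(Y_V)\to\Cl(V)=0\]
then shows $\Cl(Y_V)\cong\mathbb{Z}^{\{\text{cocircuits}\}}$ freely. For the $\mathbb{Q}$-Gorenstein case, $mK$ Cartier gives an integer $a$ with $\sum_{e\in C}a_e=-2m$ for all $C$, and $w=a/m$ works. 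Conversely, a rational $w$ with common denominator $m$ makes $mK$ Cartier. This proves both directions of Theorem~\ref{thm:gorenstein_condition}.
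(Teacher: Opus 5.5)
Your proof is correct and follows the paper's route: it combines Proposition \ref{prop:line_bundles} with the canonical divisor $-2\sum_C D_C$ and the identity $\{x_e=\infty\}|_{Y_V}=\sum_{C\ni e}D_C$, then compares coefficients in the free group $\Cl(Y_V)=\bigoplus_C\mathbb{Z}[D_C]$. The difference is in how the ingredients are obtained: you use the local chart $(x_{B\setminus c},1/x_c)$ for the canonical divisor and the multiplicities, and the excision sequence with $\mathbb{C}[V]^\times=\mathbb{C}^\times$ for freeness, whereas the paper cites Crowley--Proudfoot, uses Lemma \ref{lem:multiplication_identity} with injectivity of $\iota_*$, and cites Llompart--Descamps; you also correctly note that Cohen--Macaulayness is needed to identify Gorenstein with $K_{Y_V}$ being Cartier, which the paper leaves implicit.
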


The outline of the paper is as follows: Section 2 contains matroid-theoretic notations, definitions, and lemmas that are needed for the rest of the paper. In Section 3, we prove Proposition \ref{prop:line_bundles}, which states that every line bundle on $Y_V$ is the restriction of a line bundle on $(\mathbb{P}^1)^E.$ This proposition is used in the proof of Theorem \ref{thm:gorenstein_condition} and may be of independent interest. In Section 4, we prove Theorem \ref{thm:gorenstein_condition}. In Appendix A, we prove a technical lemma that is used for the proof of Proposition \ref{prop:line_bundles}. We end this section with some concrete examples.

\begin{example}
Let $v_1,v_2,v_3$ be the standard basis vectors of $\mathbb{C}^3$, and let 
\[V = \Span(v_1 + v_3, v_2+v_3) \subseteq \mathbb{C}^3.\]
Then, the matroid of $V$ is the uniform matroid $U_{2,3}$ on the set 123, whose cocircuits are 12, 13, and 23. It is straightforward to check that there is exactly one weight function $w: 123 \to \mathbb{Z}$ such that the weight of each cocircuit of $U_{2,3}$ is -2: it is the constant function -1. Therefore, by Theorem \ref{thm:gorenstein_condition}, the closure of $V$ in $(\mathbb{P}^1)^3$ is Gorenstein, but not smooth.
\end{example}

\begin{example}
\label{cool_example}
Let $v_1,v_2,\dots v_7$ be the standard basis vectors of $\mathbb{C}^7,$ and let 
\begin{equation*}
\begin{split}
V = \Span(& v_1+v_5+v_6+v_7, \\ & v_2+v_4+v_6+v_7, \\ & v_3 + v_4 + v_5 + v_7) \subseteq \mathbb{C}^7.
\end{split}    
\end{equation*}
The matroid of $V$ is the rank-3 matroid on the set $1234567$ whose non-bases are 126, 135, 147, 234, 257, and 367, a.k.a. the non-Fano matroid $F_7^-$. One can check that there is exactly one weight function $w: 1234567 \to \mathbb{Q}$ such that the weight of each cocircuit of $F^-_7$ is -2: the function
\[1,2,3,7 \mapsto -\frac{1}{3} \quad \text{and} \quad 4,5,6 \mapsto -\frac{2}{3}.\]
Thus, by Theorem \ref{thm:gorenstein_condition}, the closure of $V$ in $(\mathbb{P}^1)^7$ is $\mathbb{Q}$-Gorenstein, but not Gorenstein.
\end{example}

\begin{example}
Let $v_1,v_2,\dots v_6$ be the standard basis vectors of $\mathbb{C}^6,$ and let 
\begin{equation*}
\begin{split}
V = \Span(& v_1+v_4+v_5, \\ & v_2+v_4+v_6, \\ & v_3 + v_5 - v_6) \subseteq \mathbb{C}^6.
\end{split}    
\end{equation*}
The matroid of $V$ is the rank-3 matroid on the set $123456$ whose non-bases are 124, 135, 236, and 456, a.k.a. the matroid $M(K_4)$ of the complete graph on four vertices. One can check that there does not exist a weight function $w: 123456 \to \mathbb{Q}$ such that the weight of each cocircuit of $M(K_4)$ is -2.
Thus, by Theorem \ref{thm:gorenstein_condition}, the closure of $V$ in $(\mathbb{P}^1)^6$ is not $\mathbb{Q}$-Gorenstein.
\end{example}

\begin{remark}
Example \ref{cool_example} shows us that, for some choices of $V$, the matroid Schubert variety $Y_V$ can be $\mathbb{Q}$-Gorenstein, but not Gorenstein. In this sense, matroid Schubert varieties differ from classical Schubert varieties, which are $\mathbb{Q}$-Gorenstein if and only if they are Gorenstein \cite[Section 3]{woo_yong}.
\end{remark}

\subsection*{Acknowledgments} I thank my advisor, Jeremy Usatine, for his invaluable mentorship and guidance. I thank June Huh for introducing me to the matroid Schubert variety. The title of this paper is an homage to \cite{woo_yong}. This work was partially supported by Simons Foundation MPS-TSM-00007918 and NSF DMS-2502347.

\section{Requisite matroid theory}
In this section, we:
\begin{enumerate}
    \item[(1)] establish some notation,
    \item[(2)] define ``compatible pairs of a matroid,'' which will be used in Section \ref{sec:line_bundles},
    \item[(3)] collect an assortment of matroid-theoretic lemmas that we need.
\end{enumerate}

Let $M$ be a rank-$r$ matroid on a finite set E. We write:
\begin{itemize}
    \item $\mathcal{B}(M)$ to denote to the collection of bases of $M$,
    \item $\mathcal{L}(M)$ to denote the collection of flats of $M$,
    \item $\mathcal{L}_k(M)$ to denote the collection of rank-$k$ flats of $M$,
    \item $\mathcal{L}_{\neq r}(M)$ to denote the collection of proper flats of $M$,
    \item $\mathcal{H}(M)$ to denote the collection of corank-1 flats of $M$,
    \item $\mathcal{H}_e(M)$ to denote the collection of corank-1 flats of $M$ that do not contain $e \in E$,
    \item $M | S$ to denote the matroid $M$ restricted to the subset $S \subseteq E.$
\end{itemize}

\begin{definition}
A \textit{compatible pair} of $M$ is an ordered pair $(I, \mathscr{F}),$ where $I$ is an independent set of $M$, and $\mathscr{F}$ is a flag of proper flats of $M,$ such that $I$ is contained in every flat of $\mathscr{F}.$ The set $I$ and/or the flag $\mathscr{F}$ may be empty.
\end{definition}

Compatible pairs of $M$ enumerate the cones of the augmented Bergman fan of \cite{semismall}. ``Compatible pairs'' in the sense of \cite{semismall} are, under our definition, compatible pairs of the Boolean matroid.

\begin{definition}
A compatible pair $(I, \mathscr{F})$ of $M$ is \textit{maximal} if $\rk(I) + |\mathscr{F}| = r.$ A compatible pair is \textit{nearly maximal} if $\rk(I) + |\mathscr{F}| = r - 1$ and $\cl(I) \notin \mathscr{F}.$
\end{definition}

Maximal compatible pairs correspond to maximal cones of the augmented Bergman fan. Nearly maximal compatible pairs play a key role in Section \ref{sec:line_bundles}.

\begin{lemma}
\label{lem:flat_disjoint_union}
If $M$ is loopless, then the rank-1 flats of $M$ are pairwise disjoint, and each flat of $M$ can be written as the disjoint union of rank-1 flats.
\end{lemma}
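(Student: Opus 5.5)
The plan is to argue directly from the closure axioms, using looplessness to see that $\cl(\emptyset)=\emptyset$. Since $M$ has no loops, the empty set is the unique rank-0 flat. It follows that every rank-1 flat has the form $\cl(\{e\})$ for any of its elements $e$, because $\{e\}$ is independent of rank $1$.

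For pairwise disjointness, let $F_1, F_2$ be distinct rank-1 flats, and suppose $e \in F_1 \cap F_2$. An intersection of flats is a flat, so $F_1 \cap F_2$ is a flat with $\rk(F_1\cap F_2) \le 1$. It contains $e$, and $e$ is not a loop, so its rank is exactly $1$. Next I will use the standard fact that if $F \subseteq G$ are flats of equal rank, then $F = G$. Indeed, any $x \in G\setminus F$ would satisfy $\rk(F\cup\{x\}) = \rk(F)+1$, since $F$ is closed, and this contradicts $\rk(G)=\rk(F)$. Applying this fact to $F_1\cap F_2 \subseteq F_1$ and to $F_1 \cap F_2 \subseteq F_2$ gives $F_1 = F_1\cap F_2 = F_2$, a contradiction. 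Equivalently, $F_1 = \cl(\{e\}) = F_2$.

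For the decomposition, let $F$ be any flat. For each $e \in F$, the closure $\cl(\{e\})$ is a rank-1 flat containing $e$. It is contained in $F$ by monotonicity of closure, since $\cl(\{e\}) \subseteq \cl(F) = F$. Thus $F = \bigcup_{e\in F}\cl(\{e\})$. Discarding repeated members of this family leaves a union of distinct rank-1 flats, and these are pairwise disjoint by the first part. When $F=\emptyset$ the decomposition is the empty union.

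There is no real obstacle here. The only step needing care is the equal-rank-nested-flats fact, and the only place looplessness enters is to guarantee that each $\{e\}$ has rank $1$, so that the sets $\cl(\{e\})$ are genuinely rank-1 flats and $\cl(\emptyset)=\emptyset$.
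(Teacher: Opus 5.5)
Your proof is correct and follows the paper's argument. Disjointness comes from the fact that the intersection of two distinct rank-1 flats is a flat of rank less than $1$, hence equal to $\cl(\emptyset)=\emptyset$ by looplessness, and the decomposition is $F=\bigcup_{e\in F}\cl(e)$; you simply make explicit the fact that nested flats of equal rank coincide, which the paper leaves implicit.
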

\begin{proof}
The intersection of two distinct rank-1 flats of $M$ is the rank-0 flat $\cl(\emptyset).$ Since $M$ is loopless, $\cl(\emptyset) = \emptyset,$ so we have pairwise disjoint-ness.

Let a flat $F$ of $M$ be given. We first note that
\[\bigcup_{e \in F} \cl(e) \supseteq \bigcup_{e \in F} e = F.\]
We then note that, for every $e \in F,$ we have $\cl(e) \subseteq F.$ Since $M$ is loopless, the flat $\cl(e)$ has rank 1 for all $e \in F,$ so we are done.
\end{proof}

\begin{lemma}
\label{lem:matroid_pairwise_disjoint}
Let $M | S$ denote the matroid $M$ restricted to the subset $S \subseteq E.$ The set $\mathcal{B}(M|F)$ is non-empty for all flats $F$ of $M$.
If $F_1$ and $F_2$ are distinct flats of $M$, then
\[\mathcal{B}(M | F_1) \, \cap \, \mathcal{B}(M | F_2) = \emptyset.\]
\end{lemma}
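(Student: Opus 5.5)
The plan is to argue directly from the basis axioms and the definition of closure, handling the two claims separately.

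For non-emptiness, fix a flat $F$. The restriction $M|F$ is a matroid on $F$. The empty set is independent in $M$, hence in $M|F$. Since $F$ is finite, a maximal independent subset of $F$ exists. This set is a basis of $M|F$, so $\mathcal{B}(M|F)\neq\emptyset$. This step only uses that every matroid has a basis, even when the ground set is empty, in which case the basis is $\emptyset$.

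For disjointness, suppose $B \in \mathcal{B}(M|F_1)\cap\mathcal{B}(M|F_2)$. The key step is to show that $F_i = \cl(B)$ for $i=1,2$; then $F_1 = F_2$, contradicting distinctness.

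\begin{itemize}
\item[(1)] Since $B\subseteq F_i$ and $F_i$ is a flat, monotonicity of closure gives $\cl(B)\subseteq \cl(F_i)=F_i$.
\item[(2)] For the reverse inclusion, $B$ is a basis of $M|F_i$, so $\rk(B)=\rk(F_i)$. Take any $e\in F_i$. Then $B\subseteq B\cup e\subseteq F_i$, so $\rk(B\cup e)$ is squeezed between $\rk(B)$ and $\rk(F_i)$, and therefore $\rk(B\cup e)=\rk(B)$. This means $e\in\cl(B)$.
\end{itemize}

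Hence $F_i=\cl(B)$ for both $i$. Equivalently, a flat is recovered as the closure of any of its bases, so the sets $\mathcal{B}(M|F)$ for distinct flats $F$ cannot share an element.

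There is no real obstacle here. The only care needed is to use the right characterization of closure, namely $\cl(X)=\{e : \rk(X\cup e)=\rk(X)\}$, and the fact that a flat equals its own closure. Looplessness is not needed for this lemma.
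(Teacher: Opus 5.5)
Your proposal is correct and follows the same approach as the paper: non-emptiness holds because every matroid has a basis, and disjointness holds because a flat equals the closure of any basis of its restriction, so $F_1 = \cl(B) = F_2$. The only difference is that you supply the rank-squeezing justification of $F = \cl(B)$, which the paper's one-line proof leaves implicit.
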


\begin{proof}
The set of bases of a matroid is always non-empty. For the pairwise disjoint-ness, suppose that $B\in \mathcal{B}(M_V | F) \cap \mathcal{B}(M_V | F') $. Then, $F = \cl(B) = F'.$
\end{proof}

\begin{lemma}
\label{lem:matroid_set_equality}
For any $e \in E,$ let $\mathcal{B}_e(M)$ denote the set
\[\{B \setminus e \, : \, B \in \mathcal{B}(M), \: e\in B\}.\]
For all elements $e \in E$, we have the equality of sets.
\begin{equation*}
\label{eq:matroid_set_equality}
\bigsqcup_{F \in \mathcal{H}_e(M)} \mathcal{B}(M | F) = \mathcal{B}_e(M).
\end{equation*}
\end{lemma}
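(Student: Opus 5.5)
We prove the equality by double inclusion. The union on the left is disjoint by Lemma \ref{lem:matroid_pairwise_disjoint}, since the flats in $\mathcal{H}_e(M)$ are distinct. So it suffices to show that each side is contained in the other. Throughout we use the standard facts that a basis of $M|F$ for a flat $F$ of rank $k$ is an independent set of size $k$ whose closure is $F$, and that independent sets of $M$ extend to bases.

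\emph{($\subseteq$)} Let $F \in \mathcal{H}_e(M)$ and $B' \in \mathcal{B}(M|F)$. Then $B'$ is independent in $M$ with $|B'| = r-1$ and $\cl(B') = F$. Since $e \notin F = \cl(B')$, the set $B' \cup e$ is independent of size $r$, hence a basis $B$ of $M$ containing $e$. Therefore $B' = B \setminus e \in \mathcal{B}_e(M)$.

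\emph{($\supseteq$)} Let $B \in \mathcal{B}(M)$ with $e \in B$, and set $B' = B \setminus e$ and $F = \cl(B')$. Since $B'$ is independent of size $r-1$, the flat $F$ has rank $r-1$, so $F \in \mathcal{H}(M)$. We check that $e \notin F$: otherwise $B \subseteq F$, which would give $\rk(F) \geq \rk(B) = r$, a contradiction. Thus $F \in \mathcal{H}_e(M)$. Finally, $B' \subseteq F$ is independent with $|B'| = \rk(F)$, so $B' \in \mathcal{B}(M|F)$.

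There is no real obstacle here. The only point requiring care is the exchange fact used in ($\subseteq$): if $I$ is independent and $x \notin \cl(I)$, then $I \cup x$ is independent. This follows from the definition of closure, since $\rk(I \cup x) > \rk(I)$. Looplessness of $M$ is not needed for this lemma.
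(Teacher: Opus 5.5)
Your proof is correct and follows the paper's double-inclusion argument essentially step for step. The differences are minor: you get independence of $B'\cup e$ directly from $e\notin\cl(B')$, whereas the paper argues that $F\subsetneq\cl(B\cup e)$ forces $\cl(B\cup e)$ to have rank $r$; and you correctly note that loops need no separate treatment, while the paper disposes of that case first.
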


\begin{proof}

    If $e$ is a loop, then both sides of the equality are the empty set. So we assume that $e$ is not a loop.

    To show that the left-hand side is contained in the right-hand side, let $F \in \mathcal{H}_e(M)$ be given, and let $B$ be a basis of $M | F$. It will suffice to show that $B\cup e$ is a basis of $M$. Since $F$ is a flat of corank 1, the cardinality of the set $B$ is $r-1$, so the cardinality of the set $B \cup e$ is $r$. Additionally, $F$ must be a proper subset of $\cl(B \cup e),$ since $e \notin F$. The flat $F$ is corank-1, so the flat $\cl(B \cup e)$ must be corank-0; hence, the rank of $B \cup e$ is $r$. In conclusion, $B\cup e$ is a cardinality-$r$, rank-$r$ set; therefore, it is a basis of $M$.

    To show that the right-hand side is contained in the left-hand side, let $B$ be a basis of 
    $M$ containing $e$. Let $F = \cl(B \setminus e).$ Since $B \setminus e$ is an rank-$(r-1)$ independent set, we have that $F$ is a rank-$(r-1)$ flat, and that $B \setminus e \in \mathcal{B}(M|F)$. Lastly, it cannot be that $e \in F$, as this would imply that $B \subseteq F$, and a rank-$r$ independent set cannot be a subset of a rank-$(r-1)$ flat.
\end{proof}

\section{Line bundles on the matroid Schubert variety}
\label{sec:line_bundles}

The objective of this section is to prove the following proposition.

\begin{proposition}
\label{prop:line_bundles}
    Every line bundle on $Y_V$ is the restriction of a line bundle on $Y_{\mathbb{C}^E}$.
\end{proposition}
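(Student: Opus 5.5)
We will show that $\Pic(Y_V)$ is generated by the classes of the restrictions $\pi_e^*\mathcal{O}_{\mathbb{P}^1}(1)|_{Y_V}$, $e\in E$, where $\pi_e:(\mathbb{P}^1)^E\to\mathbb{P}^1$ is the $e$-th projection. The approach uses the torus action and operational Chow cohomology.

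The first step is to record the orbit structure. The torus $T=(\mathbb{C}^*)^E$ does not act on $Y_V$, but the additive group $V$ does, by translation, as does the diagonal $\mathbb{C}^*$. The $V$-orbits are indexed by flats: for $F\in\mathcal{L}(M_V)$, the stratum $Y_V^F$ consists of points whose coordinates outside $F$ are $\infty$. This stratum is isomorphic to the image of $V$ in $\mathbb{C}^F$, hence to an affine space of dimension $\rk F$. Consequently $Y_V$ has an affine paving, and in particular a cellular decomposition. By results of Fulton--MacPherson--Sottile--Sturmfels on spherical/linear varieties, or more simply because $Y_V$ admits a resolution by the augmented wonderful variety $W_V$, with its fan being the augmented Bergman fan, the operational Chow ring $A^*(Y_V)$ is computed by Kimura's descent along the resolution $\pi:W_V\to Y_V$.

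The second step uses the standard injection $\Pic(Y_V)\hookrightarrow A^1(Y_V)$. This map is injective for complete varieties with an affine paving that are normal, via $c_1$ and the fact that $H^1(\mathcal{O})=0$ together with torsion-freeness. So it suffices to show that every class in $A^1(Y_V)$ pulls back to $W_V$ into the span of the pullbacks of the $\pi_e^*\mathcal{O}(1)$.

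By Kimura, $A^*(Y_V)$ is the subring of $A^*(W_V)$ of classes whose restrictions to the exceptional loci come from the base. The locus is concretely $A^1(W_V)=\mathbb{Z}\{x_F\ (F\in\mathcal{L}_{\neq r}),\ y_e\}/\text{relations}$. Equivalently, a class in $A^1(Y_V)$ is a piecewise-linear function on the augmented Bergman fan, given by integer linear functions $\ell_\sigma$ on maximal cones $\sigma=(I,\mathscr{F})$. These must agree on common facets, and this is where nearly maximal compatible pairs enter: they index the walls across which compatibility must be checked. The descent condition forces the function to be linear on each union of cones lying over a single stratum of $Y_V$.

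The main obstacle is the combinatorial step. We must show that such a descended piecewise-linear function differs from a global linear function (i.e. $\sum a_e y_e$) by something trivial. Concretely, the agreement conditions across nearly maximal pairs should force the slopes $\ell_\sigma(e)$ to be independent of $\sigma$. We plan to attack this by induction on flags: we compare two maximal pairs differing in one step, and use Lemma~\ref{lem:matroid_set_equality} to match bases through $e$ with bases of hyperplanes avoiding $e$. Lemma~\ref{lem:matroid_pairwise_disjoint} and Lemma~\ref{lem:flat_disjoint_union} then ensure the resulting linear equations decouple. This yields a linear-algebra statement, presumably the technical lemma deferred to the appendix. Granting it, every Cartier class is $\sum a_e[\pi_e^*\mathcal{O}(1)|_{Y_V}]$, proving the proposition.
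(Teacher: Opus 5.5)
There is a genuine gap. The step that carries all the content, namely that $\pi^*c_1(L)$ lies in the span of the $y_e$, is explicitly granted, and the sketch you give for it is misformulated.

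First, agreement of the pieces $\ell_\sigma$ across walls is just what it means to be piecewise linear. It therefore holds for \emph{every} class in $A^1(W_V)$ and says nothing about descent to $Y_V$. You never write down the actual descent condition. Kimura's sequence would need $A^*$ of the closed set over which $\pi$ fails to be an isomorphism (a union of generally singular $\overline{Z}_{V,F}$) and of its preimage, and you compute neither.

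Second, and more serious, you identify $\sum_e a_e y_e$ with a ``global linear function'' and aim to show the slopes are independent of $\sigma$. But global linear functions are exactly what the relations (R1) kill in $A^1(W_V)$. By contrast, $y_e=(\iota\circ\pi)^*\tilde{y}_e$ is the genuinely piecewise-linear function $u\mapsto\max(u_e,0)$. Your target, if proved, would make every line bundle on $Y_V$ trivial. That is false: $\iota^*\tilde{y}_e$ has degree $1$ on the curve $\overline{Z}_{V,\cl(e)}$.

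The missing idea is a concrete source of linear constraints on the coefficients $a(F)$ of $c_1(\pi^*L)=\sum_F a(F)x_F$. The paper gets them from the equality $\pi_*(x_{I,\mathscr{F}}\cap[W_V])=\sum_G\pi_*(x_{\emptyset,\mathscr{F}_G}\cap[W_V])$ in $A_1(Y_V)$ for nearly maximal pairs. This is Lemma~\ref{lem:pushforward_equality}, proved via injectivity of $\iota_*$ and a Minkowski-weight computation. The projection formula (Lemma~\ref{lem:equality_of_degrees}) and Lemma~\ref{lem:chow_ring_computations}(III) turn this into linear equations on $a$. Lemmas~\ref{lem:alternate_equations} and~\ref{lem:line_bundle_construction} solve them, and injectivity of $\pi^*$ on $\Pic$ (Lemma~\ref{lem:pullback_injective}) finishes.

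Your framework can still be completed quickly along a line you only mention in passing. I have not checked this against the paper, but the steps are as follows.
\begin{enumerate}
\item Your justification of $\Pic(Y_V)\hookrightarrow A^1(Y_V)$ is not an argument. The right one is normality: $\Pic(Y_V)=\CaCl(Y_V)\hookrightarrow\Cl(Y_V)=A_{r-1}(Y_V)$, and this map factors as $L\mapsto c_1(L)\mapsto c_1(L)\cap[Y_V]$.
\item The group $V\cong\mathbb{G}_a^r$ acts by translation on the complete variety $Y_V$ with finitely many orbits, namely the strata $Z_{V,F}$. So the Fulton--MacPherson--Sottile--Sturmfels theorem should give Kronecker duality $A^1(Y_V)\cong\operatorname{Hom}(A_1(Y_V),\mathbb{Z})$.
\item By Lemma~\ref{lem:chow_group}, $A_1(Y_V)$ is free on the classes $[\overline{Z}_{V,G}]$, $G\in\mathcal{L}_1(M_V)$.
\item By Lemma~\ref{lem:pushforward_descr} and the projection formula, $\iota^*\tilde{y}_e$ pairs with $[\overline{Z}_{V,G}]$ to $1$ if $e\in G$ and to $0$ otherwise.
\item Picking one element in each of the disjoint rank-one flats then shows $\iota^*:A^1(Y_{\mathbb{C}^E})\to A^1(Y_V)$ is surjective.
\item Injectivity of $c_1$ on $\Pic(Y_V)$ then gives the proposition without any descent computation.
\end{enumerate}
As written, however, your proposal never invokes Kronecker duality and leaves the decisive step unproved.
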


Let us consider the lattice $\mathbb{Z}^E$ with standard basis elements $\{v_e\}_{e\in E}$. For any subset $S \subseteq E$, let
\[v_S :=\sum_{e\in S}v_e.\]
We describe two complete, unimodular fans on $\mathbb{Z}^E$: $\Gamma_E$ and $\Delta_{E}$.

The cones of $\Gamma_{E}$ are the Minkowski sums
\[\tilde{\sigma}_{S_1,S_2} := \text{cone}(v_e : e \in S_1) + \text{cone}(-v_e : e \in S_2)\]
over all pairs of disjoint subsets $S_1,S_2 \subseteq E.$ The set $S_1$ and/or the set $S_2$ may be empty. The toric variety $X(\Gamma_E)$ corresponding to $\Gamma_E$ is the product of projective lines $(\mathbb{P}^1)^E$.

The cones of $\Delta_{E}$ are the Minkowski sums
\[\overline{\sigma}_{I,\mathscr{F}} =:\text{cone}(v_e : e \in I) + \text{cone}\left(-v_{E \setminus F} : F \in \mathscr{F}\right)\]
for each compatible pair $(I, \mathscr{F})$ of the Boolean matroid on $E$. The toric variety $X(\Delta_E)$ associated to $\Delta_E$ is the stellahedral variety of \cite{stellahedral}.

The identity map of lattices $\id:\mathbb{Z}^E \to \mathbb{Z}^E$ induces a proper morphism of toric varieties $\rho:X(\Delta_E) \to (\mathbb{P}^1)^E$. The positive orthant of $\mathbb{Z}^E$ is a cone in both $\Gamma_E$ and $\Delta_{E}$; this gives us a canonical embedding of the corresponding affine toric variety $\mathbb{C}^E$ in both $X(\Delta_E)$ and $(\mathbb{P}^1)^E.$ The closure of the linear subspace $V \subseteq \mathbb{C}^E$ in $X(\Delta_E)$ is the augmented wonderful variety of \cite{semismall}, and is denoted by $W_V$. Let $\kappa$ denote the closed immersion $W_V \subseteq X(\Delta_E).$ The closure of $V$ in $(\mathbb{P}^1)^E$ is the matroid Schubert variety $Y_V$. Let $\iota$ denote the closed immersion $Y_V \subseteq (\mathbb{P}^1)^E.$ The image of the morphism $\rho \circ \kappa$ is $Y_V$, and the induced morphism $\pi: W_V \to Y_V$ is a resolution of singularities.

If $V = \mathbb{C}^E$, then $M_V$ is the Boolean matroid on $E$, $Y_V$ is $(\mathbb{P}^1)^E,$ and $W_V$ is $X(\Delta_E).$ For this reason, we will henceforth write the Boolean matroid on $E$ as $M_{\mathbb{C}^E},$ $(\mathbb{P}^1)^E$ as $Y_{\mathbb{C}^E},$ and $X(\Delta_E)$ as $W_\mathbb{C^E}.$ 

Diagram (\ref{dia:commutative_square}) summarizes the exposition above, and plays a central role in this section.

\begin{equation}
\label{dia:commutative_square}
\begin{tikzcd}
	{W_{\mathbb{C}^E}} && {Y_{\mathbb{C}^E}} \\
	\\
	{W_V} && {Y_V}
	\arrow["{\rho}"', from=1-1, to=1-3]
	\arrow["{\kappa}"', hook, from=3-1, to=1-1]
	\arrow["{\pi}", from=3-1, to=3-3]
	\arrow["{\iota}", hook, from=3-3, to=1-3]
\end{tikzcd}
\end{equation}

In this paper, we use Chow homology and operational Chow cohomology, the latter of which was introduced in \cite{operational_chow_cohomology}. For a variety $X,$ we write $A_*(X)$ to denote its Chow group and $A^*(X)$ to denote its operational Chow ring. If $X$ is a $d$-dimensional smooth variety, then there is a ``Poincar\'e dualilty'' isomorphism from $A^k(X)$ to $A_{d-k}(X)$ given by sending a cohomology class $\alpha \in A^k(X)$ to $\alpha \cap [X] \in A_{d-k}(X) $. All future mentions of Poincar\'e duals or Poincar\'e duality will be in the sense described here. If $X$ is proper, it has a degree map $\deg_X: A_0(X) \to \mathbb{Z}.$

For a subset $S \subseteq E,$ let us denote the corresponding affine stratum of $Y_{\mathbb{C}^E}$ by
\[Z_{\mathbb{C}^E, S} := \ \prod_{e \in S} \mathbb{C} \ \times \prod_{e \in E \setminus S} \{\infty\} \ \subset (\mathbb{P}^1)^E.\]

\begin{proposition}
\label{lem:stratification}
\cite[Proposition 4.10]{proudfoot_2018}
The matroid Schubert variety $Y_V$ can be expressed as the disjoint union of schemes
\[\bigsqcup_{F \in \mathcal{L}(M_V)} Z_{V,F},\]
where $Z_{V,F} = Y_V \cap Z_{\mathbb{C}^E,F} $ for each flat $F$ of $M_V$. The stratum $Z_{V,F}$ is isomorphic to $\mathbb{C}^{\rk(F)},$ and its closure is
\[\overline{Z}_{V,F} = \bigcup_{G \subseteq F} Z_{V,G},\]
where the union is over all flats $G$ of $M_V$ such that $G \subseteq F.$
\end{proposition}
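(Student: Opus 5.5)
The plan is to understand $Y_V \cap Z_{\mathbb{C}^E,S}$ for every subset $S\subseteq E$ and then assemble these pieces. Since the sets $Z_{\mathbb{C}^E,S}$ partition $(\mathbb{P}^1)^E$, the variety $Y_V$ is the disjoint union of the locally closed pieces $Y_V\cap Z_{\mathbb{C}^E,S}$. The statement then follows from three claims:
\begin{enumerate}
\item[(a)] $Y_V\cap Z_{\mathbb{C}^E,S}$ is nonempty only when $S$ is a flat of $M_V$;
\item[(b)] for a flat $F$, the piece $Y_V\cap Z_{\mathbb{C}^E,F}$ equals the image of $V$ under the coordinate projection $\proj_F:\mathbb{C}^E\to\mathbb{C}^F\cong Z_{\mathbb{C}^E,F}$;
\item[(c)] the closure relation follows from (b).
\end{enumerate}

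For (a) and (b), I would work in the open affine chart $U_S=\prod_{e\in S}\mathbb{C}\times\prod_{e\notin S}(\mathbb{P}^1\setminus 0)$, with coordinates $x_e$ for $e\in S$ and $y_e=1/x_e$ for $e\notin S$. This chart contains $Z_{\mathbb{C}^E,S}$ as the locus $\{y_e=0 : e\notin S\}$. The idea is to describe limits of points of $V$ via one-parameter curves.

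For the inclusion $\supseteq$ in (b), take $F$ a flat and $v\in V$. Since $M_V$ is loopless and $F$ is a flat, for each $e\notin F$ the coordinate functional $x_e$ does not vanish identically on the subspace $V_F:=\{u\in V: u_f=0 \ \forall f\in F\}$. Indeed, if $x_e$ vanished on $V_F$, then $x_e$ would lie in the span of the $x_f$ restricted to $V$, i.e.\ $e\in\cl(F)=F$. So a generic $u\in V_F$ has $u_e\neq0$ for all $e\notin F$. The curve $v+tu$ as $t\to\infty$ then converges in $(\mathbb{P}^1)^E$ to the point with coordinates $v_f$ for $f\in F$ and $\infty$ elsewhere. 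This shows $\proj_F(V)\subseteq Y_V$.

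For the reverse inclusion and for (a), I would show that $\overline{V}\cap U_S$ lies in the vanishing locus of the polynomials obtained by homogenizing linear relations. Let $\ell=\sum a_e x_e$ be a linear form vanishing on $V$, i.e.\ a vector in $V^\perp$, whose supports are the (co)dependencies. Multiplying by $\prod_{e\notin S,\,a_e\neq0} y_e$ gives a polynomial vanishing on $V\cap U_S$, hence on its closure. On the locus $y\equiv 0$, this forces:
\begin{itemize}
\item if $\ell$ has exactly one coordinate outside $S$, say $e$, then multiplying by $y_e$ gives $a_e + y_e\sum_{f\in S}a_fx_f=0$, which is impossible at $y_e=0$. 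So no relation has exactly one coordinate outside $S$, which says precisely that $S$ is closed, giving (a).
\item if $\ell$ is supported in $S$, then $\sum_{f\in S} a_f x_f = 0$ on the stratum. Hence the stratum piece lies in the common zero set of $V^\perp$ restricted to $F$, which is $\proj_F(V)$ (dually, $(\proj_F V)^\perp = V^\perp\cap\mathbb{C}^F$), giving (b).
\end{itemize}
Then $\proj_F(V)\cong\mathbb{C}^{\rk F}$ because $\dim\proj_F(V)=\rk_{M_V}(F)$ by the definition of the matroid.

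The main obstacle is the set-theoretic versus scheme-theoretic issue. A point of the closure must satisfy these equations, but equations alone give only containment, which is the direction I want here, while (b) needs exactly $\proj_F(V)$. For the scheme structure, I would take $Z_{V,F}$ with its reduced structure. If a scheme statement is needed, I would note that the ideal of $Y_V\cap U_S$ is generated by those homogenized circuit relations, and restricting them to $y=0$ gives linear equations, so the intersection is reduced.

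Finally, for (c), the closure of $Z_{V,F}=\proj_F(V)\subseteq\mathbb{C}^F$ inside the closed subvariety $\prod_{e\notin F}\{\infty\}\times(\mathbb{P}^1)^F$ is the matroid Schubert variety of $\proj_F(V)$, whose matroid is $M_V|F$. Applying the stratification just proved to $\proj_F(V)$, whose flats are exactly the flats $G\subseteq F$ of $M_V$, and using $\proj_G\circ\proj_F=\proj_G$, identifies its strata with the $Z_{V,G}$ for $G\subseteq F$.
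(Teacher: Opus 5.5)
Your proof is correct. The paper gives no argument of its own for this statement; it imports it from \cite[Proposition 4.10]{proudfoot_2018}. So your write-up is a self-contained replacement for the citation rather than an alternative to an in-paper route. The steps are as follows. The lines $v+tu$, with $u$ generic in $V_F$, give $\proj_F(V)\subseteq Y_V\cap Z_{\mathbb{C}^E,F}$. The dehomogenized linear relations rule out non-flats and give the reverse inclusion via $(\proj_F V)^\perp=V^\perp\cap\mathbb{C}^F$. The closure statement comes from applying the result to $\proj_F(V)\subseteq\mathbb{C}^F$, whose matroid $M_V|F$ has as flats exactly the flats of $M_V$ contained in $F$; this uses that $F$ is a flat. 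The one implicit step is that $V\cap U_S$ is dense in $Y_V\cap U_S$. This holds because looplessness makes $V\cap U_S$ a nonempty open subset of $V$.

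Your worry about the scheme structure is unnecessary, and so is the appeal to the Ardila--Boocher theorem that the multihomogenized circuit forms generate the ideal of $Y_V$ (true, but not needed). Your computation already puts every $\ell\in V^\perp\cap\mathbb{C}^F$ into the ideal of $Y_V\cap U_F$. So the ideal $\bar I\subseteq\mathbb{C}[x_f : f\in F]$ of the scheme-theoretic intersection $Y_V\cap Z_{\mathbb{C}^E,F}$ contains the prime linear ideal $J$ of $\proj_F(V)$. Your limit argument shows the zero set of $\bar I$ is all of $\proj_F(V)$. Hence $J\subseteq\bar I\subseteq\sqrt{\bar I}=J$, and the intersection is the reduced affine space $\proj_F(V)\cong\mathbb{C}^{\rk(F)}$ on the nose. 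For the paper's purposes the reduced structure would suffice anyway: it uses this proposition only through the affine paving behind Lemma~\ref{lem:chow_group} and the classes $[\overline{Z}_{V,F}]$. Conceptually, translation by $V$ extends to $(\mathbb{P}^1)^E$ (fixing $\infty$) and preserves $Y_V$. What you have shown is that the strata are exactly the $V$-orbits, which is why each one is an affine space of dimension $\rk(F)$.
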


\begin{lemma}
\label{lem:chow_group}
We have
\[A_k(Y_V) = \bigoplus_{F \in \mathcal{L}_k(M_V)}\mathbb{Z} \cdot [\overline{Z}_{V,F}],\]
where $[\overline{Z}_{V,F}]$ denotes the Chow homology class of the closed subvariety $\overline{Z}_{V,F}.$
\end{lemma}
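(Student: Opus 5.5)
We will use the affine paving of $Y_V$ given by Proposition~\ref{lem:stratification}. The standard result for schemes with a filtration by closed subschemes whose successive differences are affine spaces is that the Chow groups are freely generated by the closures of the cells (Fulton, Example 1.9.1 and Example 19.1.11). We reprove the relevant parts directly, in two steps: first that the classes generate, and then that they are independent.

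\textbf{Generation.} We proceed by induction on the number of strata. We order the flats of $M_V$ by a linear extension of inclusion, $G_1=\emptyset, G_2,\dots,G_N=E$. Setting $X_j=\bigcup_{i\le j} Z_{V,G_i}$, each $X_j$ is closed in $Y_V$: by Proposition~\ref{lem:stratification}, the closure of each stratum in $X_j$ is a union of strata indexed by smaller flats. Moreover $X_j\setminus X_{j-1}=Z_{V,G_j}\cong\mathbb{C}^{\rk(G_j)}$. For each $j$ we have the exact localization sequence
\[A_k(X_{j-1})\to A_k(X_j)\to A_k(\mathbb{C}^{\rk G_j})\to 0.\]
Here $A_k(\mathbb{C}^m)$ is $\mathbb{Z}\cdot[\mathbb{C}^m]$ when $k=m$ and $0$ otherwise, and the generator lifts to $[\overline{Z}_{V,G_j}]$. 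By induction, the classes $[\overline{Z}_{V,F}]$ with $\rk F=k$ therefore generate $A_k(Y_V)$.

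\textbf{Injectivity.} This is the main obstacle, because the localization sequence is not left exact in general. The plan is to show that the classes are independent by pushing forward along $\iota$ to $(\mathbb{P}^1)^E$ and pairing with cohomology there. We know that $A_*((\mathbb{P}^1)^E)$ is free with basis $[\overline{Z}_{\mathbb{C}^E,S}]$ for $S\subseteq E$. The Poincaré dual of $[\overline{Z}_{\mathbb{C}^E,S}]$ pairs with $[\overline{Z}_{\mathbb{C}^E,T}]$, for $|S|+|T|=|E|$, giving $1$ exactly when $S\cup T=E$.

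We then compute $\iota_*[\overline{Z}_{V,F}]$ by intersecting with products of fibers. Take a basis $B$ of $M_V|F$ with $|B|=k$. Consider the cycle class $h_B=\prod_{e\notin B}[\{x_e=c_e\}]$, where the $c_e$ are generic finite values. We intersect $\overline{Z}_{V,F}$ with this class. On the stratum $Z_{V,F}\cong\mathbb{C}^{k}$, which is the image of $V$ under projection to $\mathbb{C}^F$, imposing generic values on the coordinates of $F\setminus B$ leaves a positive-dimensional or empty set. We therefore instead use the classes $\prod_{e\in E\setminus B}[\{x_e=\infty\}]^{\vee}$, whose duals are the coordinate points, and equivalently the degrees
\[\deg\bigl(\iota_*[\overline{Z}_{V,F}]\cdot [\overline{Z}_{\mathbb{C}^E,E\setminus B}]\bigr).\]
This degree counts the intersection of $\overline{Z}_{V,F}$ with the locus where the coordinates outside $B$ are fixed at $0$ and the coordinates in $B$ are free. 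Inside the affine chart this is $\{x\in V\text{-image}:x_{E\setminus B}\text{ constrained}\}$. It is a transverse single point when $B$ is a basis of $M_V|F$, because the projection $Z_{V,F}\to\mathbb{C}^B$ is an isomorphism exactly when $B$ is a basis of $M_V|F$. When $B$ is not contained in $F$, or is dependent, the intersection is empty.

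This yields a pairing matrix between the flats $F\in\mathcal{L}_k(M_V)$ and the $k$-subsets $B$. Fix for each flat $F$ a chosen basis $B_F$. Then the entry $(F,B_{F'})$ is $1$ if $B_{F'}\in\mathcal{B}(M_V|F)$ and $0$ otherwise. By Lemma~\ref{lem:matroid_pairwise_disjoint}, a set can be a basis of $M_V|F$ for only one flat $F$, so this matrix is the identity. Hence the classes $\iota_*[\overline{Z}_{V,F}]$ are linearly independent, and therefore so are the classes $[\overline{Z}_{V,F}]$.

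The delicate point is verifying transversality and properness of these intersections at the boundary strata. We handle it by noting that $\overline{Z}_{\mathbb{C}^E,E\setminus B}$ meets $Y_V$ only in strata $Z_{V,G}$ with $G\supseteq B$. Combined with the dimension count $\rk G\ge k$, this forces $G=F$ within $\overline{Z}_{V,F}$.
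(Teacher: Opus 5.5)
Your opening citation of Fulton's Example 19.1.11 already gives the lemma over $\mathbb{C}$. The paper's proof is of exactly this kind: it applies the Rossell\'o Llompart--Xamb\'o Descamps theorem to the affine paving of Proposition \ref{lem:stratification}.

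Your reproof takes a genuinely different route. Generation via the localization sequence is fine as written. Independence, however, you obtain extrinsically: you pair $\iota_*[\overline{Z}_{V,F}]$ with the classes $[\overline{Z}_{\mathbb{C}^E,E\setminus B}]$ in $(\mathbb{P}^1)^E$ and use Lemma \ref{lem:matroid_pairwise_disjoint} to get an identity matrix. This makes the argument self-contained and proves Corollary \ref{cor:pushforward_injective} simultaneously. Pairing with every $k$-subset $B$ would essentially recover Lemma \ref{lem:pushforward_descr} as well. The cost is an explicit intersection computation tied to the embedding, which the intrinsic citation avoids.

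That computation, however, is misstated in a way that matters. $\overline{Z}_{\mathbb{C}^E,E\setminus B}$ is the locus where the coordinates \emph{in} $B$ equal $\infty$ and the others are free, not the reverse. It meets $Z_{V,G}$ exactly when $G\cap B=\emptyset$, not when $G\supseteq B$. So it always contains $(\infty,\dots,\infty)$ and can meet $\overline{Z}_{V,F}$ in excess dimension. For example, for $U_{2,3}$ with $F=E$ and $B=12$, the intersection is $\overline{Z}_{V,\{3\}}\cong\mathbb{P}^1$. You therefore cannot count points on it directly, and your final ``properness'' paragraph is false as stated.

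The repair is to replace it by the rationally equivalent cycle $\{x_b=c_b \text{ for } b\in B\}$ with finite $c_b$. This cycle meets $Z_{V,G}$ only if $B\subseteq G$.
\begin{itemize}
\item If $B\in\mathcal{B}(M_V|F)$, then $B\subseteq G\subseteq F$ forces $G=\cl(B)=F$. Since $Z_{V,F}\to\mathbb{C}^B$ is an isomorphism, you get one transverse point in the smooth open stratum.
\item If $B\not\subseteq F$, the intersection is empty, because $x_b\equiv\infty$ on $\overline{Z}_{V,F}$ for $b\notin F$.
\end{itemize}
You never need the dependent case, where emptiness would require generic $c_b$; with $c_b=0$ the intersection contains the origin of $Z_{V,F}$. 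Indeed, $B_{F'}\subseteq F$ with $\rk F=\rk F'$ would force $F'=\cl(B_{F'})\subseteq F$, hence $F=F'$.

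Alternatively, you can avoid transversality entirely. Let $p_B:(\mathbb{P}^1)^E\to(\mathbb{P}^1)^B$ be the projection. Then $\tilde{y}_B$, the Poincar\'e dual of $[\overline{Z}_{\mathbb{C}^E,E\setminus B}]$, is the pullback under $p_B$ of the point class of $(\mathbb{P}^1)^B$. By the projection formula, the pairing is the degree of $p_B$ restricted to $\overline{Z}_{V,F}$. This is $1$ (the map is birational) when $B$ is a basis of $M_V|F$, and $0$ (the image has smaller dimension) when $B\not\subseteq F$. With either fix your matrix is the identity and the argument is complete.
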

\begin{proof}
This is a direct application of \cite[Corollary to Theorem 1]{llompart_descamps}.
\end{proof}

\begin{lemma}
\label{lem:pushforward_descr}
\cite[Lemma 9.3]{stellahedral}
The pushforward map $\iota_*: A_*(Y_V) \to A_*(Y_{\mathbb{C}^E})$ sends $[\overline{Z}_{V, F}]$ to
\[\sum_{B \in \mathcal{B}(M_V | F)} [\overline{Z}_{\mathbb{C}^E,\, B}].\]
\end{lemma}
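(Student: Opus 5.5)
The plan is to reduce to the case $F=E$ and then compute the coefficients of $\iota_*[Y_V]$ by intersecting with dual classes in $(\mathbb{P}^1)^E$.

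\emph{Reduction to $F=E$.} By Proposition~\ref{lem:stratification}, $Z_{V,F}=Y_V\cap Z_{\mathbb{C}^E,F}$. Under the identification $Z_{\mathbb{C}^E,F}\cong\mathbb{C}^F$, the stratum $Z_{V,F}$ is the linear subspace $V_F:=\proj_F(V)\subseteq\mathbb{C}^F$. This holds because $Y_V\cap Z_{\mathbb{C}^E,F}$ is the limit of $V$ under the one-parameter subgroup sending the coordinates outside $F$ to $\infty$. It is consistent with $Z_{V,F}\cong\mathbb{C}^{\rk(F)}$. Hence
\[\overline{Z}_{V,F}=Y_{V_F}\times\{\infty\}^{E\setminus F}\subseteq(\mathbb{P}^1)^F\times(\mathbb{P}^1)^{E\setminus F}.\]
The matroid of $V_F$ is $M_V|F$, which is loopless since $M_V$ is. Pushforward along the closed embedding $(\mathbb{P}^1)^F\times\{\infty\}^{E\setminus F}\hookrightarrow(\mathbb{P}^1)^E$ sends $[\overline{Z}_{\mathbb{C}^F,B}]$ to $[\overline{Z}_{\mathbb{C}^E,B}]$. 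So it suffices to prove
\[[Y_V]=\sum_{B\in\mathcal{B}(M_V)}[\overline{Z}_{\mathbb{C}^E,B}]\quad\text{in } A_r\big((\mathbb{P}^1)^E\big),\qquad r=\dim V.\]

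\emph{Coefficients via duality.} The group $A_r((\mathbb{P}^1)^E)$ is free with basis $[\overline{Z}_{\mathbb{C}^E,S}]=[(\mathbb{P}^1)^S\times\{\infty\}^{E\setminus S}]$ for $|S|=r$. The degree pairing with $A_{|E|-r}$ is perfect, with dual basis the classes
\[P_S:=[\{p\}^S\times(\mathbb{P}^1)^{E\setminus S}],\]
since $\deg(\overline{Z}_{\mathbb{C}^E,S}\cdot P_T)=\delta_{S,T}$. So the coefficient of $[\overline{Z}_{\mathbb{C}^E,S}]$ in $[Y_V]$ equals $\deg([Y_V]\cdot P_S)$. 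This is the degree of the projection $q_S\colon Y_V\to(\mathbb{P}^1)^S$, taken to be $0$ if $q_S$ is not dominant.

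Now I compute these degrees.
\begin{itemize}
\item If $S$ is not a basis, then $\proj_S\colon V\to\mathbb{C}^S$ is not surjective. Since $V$ is dense in $Y_V$, the image of $q_S$ is contained in the closure of a proper linear subspace, so its dimension is less than $r$. Hence $q_S$ is not dominant and the coefficient is $0$.
\item If $S$ is a basis, then $\proj_S|_V\colon V\to\mathbb{C}^S$ is a linear isomorphism. Choose a general point $p\in\mathbb{C}^S$. Because $PGL_2^E$ acts transitively on $(\mathbb{P}^1)^E$ and we are in characteristic $0$, Kleiman transversality allows us to take $p$ so that $q_S^{-1}(p)$ is a reduced finite set avoiding the boundary $Y_V\setminus V$. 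The boundary has dimension less than $r$, so its image misses a general $p$. Inside $V$, the fiber is the single point $(\proj_S|_V)^{-1}(p)$, so the coefficient is $1$.
\end{itemize}

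\emph{Main obstacle.} The step needing care is the transversality and multiplicity-one claim in the basis case. I will handle it directly rather than only citing Kleiman. Over the open set $\mathbb{C}^S$, the map $q_S^{-1}(\mathbb{C}^S)\cap V\to\mathbb{C}^S$ is an isomorphism. The complement $q_S^{-1}(\mathbb{C}^S)\setminus V$ lies in the boundary strata $Z_{V,G}$ with $G\ne E$, each of dimension less than $r$. Hence over a dense open subset of $\mathbb{C}^S$ the map $q_S$ is an isomorphism, so $q_S$ is birational and has degree $1$. Combining the two cases gives the formula.
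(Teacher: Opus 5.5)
Your argument is correct, and it takes a different route from the paper, which gives no proof here and simply imports the statement from \cite[Lemma 9.3]{stellahedral}.

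\textbf{Your route.} You work entirely inside $(\mathbb{P}^1)^E$. First you identify $\overline{Z}_{V,F}$ with $Y_{\proj_F(V)}\times\{\infty\}^{E\setminus F}$, the matroid Schubert variety of the loopless matroid $M_V|F$. You then pair $[Y_V]$ against the dual basis $P_S=\tilde y_S$. This makes each coefficient the degree of the coordinate projection $Y_V\to(\mathbb{P}^1)^S$, which is $1$ if $S$ is a basis and $0$ otherwise. Your birationality argument makes the Kleiman remark unnecessary. The argument is elementary, needs neither the stellahedral variety nor Minkowski weights, and explains geometrically why bases appear.

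\textbf{The route closest to the paper's machinery.} One would instead write $\iota_*[Y_V]=\rho_*\kappa_*[W_V]=\rho_*(\overline{w}_{M_V}\cap[W_{\mathbb{C}^E}])$ by Lemma \ref{lem:pushforward_wonderful_model}, and evaluate as in Lemma \ref{lem:pushforward_weight}. The resulting Minkowski weight on $\Gamma_E$ takes the value $\overline{w}_{M_V}(\overline{\sigma}_{S_1\cup S_2,\emptyset})$ on $\tilde{\sigma}_{S_1,S_2}$, which is $1$ exactly when $S_1\cup S_2$ is a basis. That version is purely combinatorial and extends to non-realizable matroids. However, it rests on the deeper input that $\kappa_*[W_V]$ is the augmented Bergman weight. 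For $F\neq E$ it is most easily run through the same reduction to $M_V|F$ that you use.

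\textbf{One small repair.} Justifying $Z_{V,F}=\proj_F(V)\times\{\infty\}^{E\setminus F}$ as ``the limit of $V$ under a one-parameter subgroup'' is only a heuristic, since torus translates of $Y_V$ are not $Y_V$. Instead, pick $u\in V$ with $u|_F=0$ and $u_e\neq 0$ for all $e\notin F$; this is possible because $F$ is a flat. The curves $v+t^{-1}u$ then give the inclusion $\proj_F(V)\times\{\infty\}^{E\setminus F}\subseteq Z_{V,F}$. Equality follows because the left side is closed of dimension $\rk(F)$ inside $Z_{V,F}$, which is irreducible of dimension $\rk(F)$ by Proposition \ref{lem:stratification}.
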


\begin{corollary}
\label{cor:pushforward_injective}
The push-forward map $\iota_*: A_*(Y_V) \to A_*(Y_{\mathbb{C}^E})$ of Chow groups is injective.
\end{corollary}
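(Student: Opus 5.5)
By Lemma~\ref{lem:chow_group}, $A_*(Y_V)$ is free with basis $\{[\overline{Z}_{V,F}] : F \in \mathcal{L}(M_V)\}$. Applied to $V=\mathbb{C}^E$, whose flats are all subsets of $E$, the same lemma shows that $A_*(Y_{\mathbb{C}^E})$ is free with basis $\{[\overline{Z}_{\mathbb{C}^E,S}] : S \subseteq E\}$. Moreover, $\iota_*$ preserves dimension, so it suffices to check injectivity degree by degree.

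By Lemma~\ref{lem:pushforward_descr}, $\iota_*$ sends the basis element $[\overline{Z}_{V,F}]$ to the sum of the basis elements $[\overline{Z}_{\mathbb{C}^E,B}]$ over $B \in \mathcal{B}(M_V|F)$. Each such $B$ has cardinality $\rk(F)$, so degrees are compatible. Take a class $\alpha = \sum_{F} a_F [\overline{Z}_{V,F}]$ in $A_k(Y_V)$ with $\iota_*\alpha = 0$. Then
\[
0 = \sum_{F \in \mathcal{L}_k(M_V)} a_F \sum_{B \in \mathcal{B}(M_V|F)} [\overline{Z}_{\mathbb{C}^E,B}].
\]
By Lemma~\ref{lem:matroid_pairwise_disjoint}, the index sets $\mathcal{B}(M_V|F)$ are pairwise disjoint as $F$ varies, and each is nonempty. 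Hence the images of distinct $F$ are sums over disjoint nonempty subsets of a basis. Fix $F$ and pick any $B \in \mathcal{B}(M_V|F)$. The coefficient of $[\overline{Z}_{\mathbb{C}^E,B}]$ in the display is exactly $a_F$, and this coefficient must vanish. So $a_F=0$ for every $F$, and $\iota_*$ is injective.

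The only point needing care is the freeness of $A_*(Y_{\mathbb{C}^E})$ on the classes $[\overline{Z}_{\mathbb{C}^E,S}]$. This follows from Lemma~\ref{lem:chow_group} with $V=\mathbb{C}^E$, where $M_{\mathbb{C}^E}$ is the Boolean matroid, or alternatively from the standard description of the Chow group of $(\mathbb{P}^1)^E$. With that in hand, the argument is just the observation that a map sending basis vectors to sums over pairwise disjoint nonempty sets of basis vectors is injective.
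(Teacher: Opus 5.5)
Your proposal is correct and follows essentially the same argument as the paper: you combine Lemma~\ref{lem:chow_group} (free bases, including for the Boolean case $V=\mathbb{C}^E$), Lemma~\ref{lem:pushforward_descr}, and the nonemptiness and pairwise disjointness of the sets $\mathcal{B}(M_V|F)$ from Lemma~\ref{lem:matroid_pairwise_disjoint}. You simply spell out the coefficient-comparison step and the freeness of $A_*(Y_{\mathbb{C}^E})$, which the paper leaves implicit.
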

\begin{proof}
By Lemma \ref{lem:pushforward_descr}, the map $\iota_*$ sends the free generator $[\overline{Z}_{V,F}]$ to the sum of free generators
\[\sum_{B \in \mathcal{B}(M_V | F)} [\overline{Z}_{\mathbb{C}^E,\, B}].\]
But by Lemma \ref{lem:matroid_pairwise_disjoint}, the sets $\mathcal{B}(M_V | F)$ are non-empty and pairwise disjoint.
\end{proof}

\begin{opchowring}
The product of projective lines $Y_{\mathbb{C}^E}$ is a smooth, proper toric variety, so \cite[Theorem 10.8]{danilov} gives us a presentation of its operational Chow ring:
\begin{equation*}
\frac{\mathbb{Z}[\tilde{y}_e \mid e \in E]}{<{{\tilde{y}_e}^2}, \, e\in E >}.
\end{equation*}
For each subset $S = \{s_1, s_2,\dots,s_m\}$ of $E,$ we use the symbol $\tilde{y}_S$ to denote the element
\[\tilde{y}_{s_1}\tilde{y}_{s_2}\dots\tilde{y}_{s_m}.\]
For a pair of disjoint subsets $S_1, S_2 \subseteq S$ such that $S_1 \cup S_2 = S,$
we have that $\tilde{y}_S$ is Poincar\'e dual to the Chow homology class $[V(\tilde{\sigma}_{S_1, S_2})],$ i.e., $\tilde{y}_S \, \cap \, [Y_{\mathbb{C}^E}] = [V(\tilde{\sigma}_{S_1, S_2})]$, where $V(\tilde{\sigma}_{S_1, S_2})$ denotes the torus orbit closure corresponding to the cone $\tilde{\sigma}_{S_1, S_2} \in \Gamma_E.$ We note here that $[V(\tilde{\sigma}_{S_1, S_2})] =[\overline{Z}_{\mathbb{C}^E, E \setminus (S_1 \cup S_2)}].$    
\end{opchowring}

\begin{opchowring}
The stellahedral variety $W_{\mathbb{C}^E}$ is also a smooth, proper toric variety, so \cite[Theorem 10.8]{danilov}, combined with \cite[Corollary 3.14]{stellahedral}, gives us a presentation of its operational Chow ring:
\begin{equation*}
\frac{\mathbb{Z}[\bar{x}_S \mid S \subsetneq E] \otimes \mathbb{Z} [\bar{y}_e \mid e \in E]    }{\overline{I}_E},
\end{equation*}
where $\overline{I}_E$ is the ideal generated by
\begin{equation}
\label{eq:stella_ideal_description_1}
\tag{\textoverline{R1}}
\overline{y}_e - \sum_{S\not\ni e}\overline{x}_S \quad \text{ for every element } e \in E,
\end{equation}
\begin{equation}
\label{eq:stella_ideal_description_2}
\tag{\textoverline{R2}}
\overline{x}_{S_1}\overline{x}_{S_2} \quad \text{ for every pair of incomparable proper subsets } S_1, S_2, \text{ and}
\end{equation}
\begin{equation}
\label{eq:stella_ideal_description_3}
\tag{\textoverline{R3}}
\overline{y}_e\overline{x}_S \quad \text{ for every $e \in E$ and for every proper subset $S$ not containing $e$.}
\end{equation}
Let $I = \{i_1,\dots,i_s\}$ be a (possibly empty) $s$-element subset $S\subseteq E$, and let 
\[\mathscr{F}:F_{1} \subsetneq F_{2} \subsetneq \cdots \subsetneq F_{t}\]
be a (possibly empty) increasing sequence of proper subsets of $E$, such that $(I, \mathscr{F})$ is a compatible pair of $M_{\mathbb{C}^E}.$ We use the symbol $\overline{x}_{I, \mathscr{F}}$ to denote the element
\[\overline{y}_{i_1}\overline{y}_{i_2}\cdots \overline{y}_{i_s} \overline{x}_{F_1}\overline{x}_{F_2}\cdots \overline{x}_{F_t} \in A^{s+t}(W_V),\]
which is Poincar\'e dual to the Chow homology class $[V(\sigma_{I, \mathscr{F}})].$ The pullback map ${\rho}^*: A^*(Y_{\mathbb{C}^E}) \to A^*(W_{\mathbb{C}^E})$ sends $\tilde{y}_e$ to $\overline{y}_e$.
\end{opchowring}

\begin{opchowring}
We have from \cite[Remark 2.16]{semismall} 
that the operational Chow ring of $W_V$ is
\begin{equation*}    
\frac{\mathbb{Z}[x_F \mid F \in \mathcal{L}_{\neq r} (M_V)] \otimes \mathbb{Z} [y_e \mid e \in E]}{I_M}
\end{equation*}
where $I_M$ is the ideal generated by
\begin{equation}
\label{eq:ideal_description_1}
\tag{R1}
y_e - \sum_{F\not\ni e} x_F \quad \text{ for every element } e \in E,
\end{equation}
\begin{equation}
\label{eq:ideal_description_2}
\tag{R2}
x_{F_1}x_{F_2} \quad \text{ for every pair of incomparable proper flats } F_1, F_2, \text{ and}
\end{equation}
\begin{equation}
\label{eq:ideal_description_3}
\tag{R3}
y_ex_F \quad \text{ for every $e \in E$ and for every proper flat $F$ not containing $e$.}
\end{equation}
Let $I = \{i_1,\dots,i_s\}$ be a (possibly empty) rank-$s$ independent set in $M_V$, and let 
\[\mathscr{F}:F_{1} \subsetneq F_{2} \subsetneq \cdots \subsetneq F_{t}\]
be a (possibly empty) flag of $t$ proper flats of $M_V$, such that $(I, \mathscr{F})$ is a compatible pair of $M_V$. We use the symbol $x_{I, \mathscr{F}}$ to denote the element
\[y_{i_1}y_{i_2}\cdots y_{i_s} x_{F_1}x_{F_2}\cdots x_{F_t} \in A^{s+t}(W_V).\]
The pullback map ${\kappa}^*: A^*(W_{\mathbb{C}^E}) \to A^*(W_V)$ sends $\overline{x}_F$ to $x_F$ and $\overline{y}_e$ to $y_e$.
\end{opchowring}

The degree map $\deg_{W_V}: A_0(W_V) \to \mathbb{Z}$ is described in \cite[Definition 2.15]{semismall}. If $(I, \mathscr{F})$ is a maximal compatible pair on $M_V$, then $\deg_{W_V}(x_{I, \mathscr{F}} \cap [W_V])=1.$

\begin{lemma}
\label{lem:chow_ring_computations}

Let $I = \{i_1,\dots,i_s\}$ be a (possibly empty) rank-$s$ independent set of $M_V$, and let 
\[\mathscr{F}:F_{s+1} \subsetneq F_{s+2} \subsetneq \cdots \subsetneq F_{s+t}\]
be a (possibly empty) flag of $t$ proper flats of $M_V$, such that $(I, \mathscr{F})$ is a nearly maximal compatible pair of $M_V$.
If $\mathscr{F}$ is non-empty, we have:
\begin{enumerate}
    \item[(I)] $\deg_{W_V}((x_{F_{s+1}} \cup x_{I, \mathscr{F}}) \cap [W_V]) = -1$, and
    \item[(II)] $\deg_{W_V}((x_{F_{s+k}} \cup x_{I, \mathscr{F}}) \cap [W_V]) = 0$ for all $2 \leq k \leq t$.
\end{enumerate}
Furthermore, let $a: \mathcal{L}_{\neq r} (M_V) \to \mathbb{Z}$ be a set function, and let $\alpha$ be the element
\[\sum_{F \in \mathcal{L}_{\neq r} (M_V)} a(F)x_F\]
in  $A^1(W_V)$. Then, we have:
\begin{enumerate}
    \item[(III)] $\deg_{W_V}((\alpha \cup x_{I, \mathscr{F}}) \cap [W_V]) = a(\cl(I)) - a(F_{s+1}),$
\end{enumerate}
setting $a(F_{s+1}) := 0$ if $\mathscr{F}$ is empty.
\end{lemma}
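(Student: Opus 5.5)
The plan is to reduce everything to products with a single divisor class $x_G$, and then use (R1)–(R3) together with the fact that maximal compatible pairs have degree $1$. The first step is a rank count. Since $(I,\mathscr{F})$ is nearly maximal, we have $s+t=r-1$ and $\cl(I)\notin\mathscr{F}$. Every $F_{s+j}$ contains $I$ and differs from $\cl(I)$, so $\rk F_{s+j}>s$. The ranks are strictly increasing and at most $r-1$, so exactly $t=r-1-s$ values $s+1,\dots,r-1$ are available. This forces $\rk F_{s+j}=s+j$ for all $j$.

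The key observation follows from this. Let $G$ be a proper flat not in $\mathscr{F}$ with $x_G\cup x_{I,\mathscr{F}}\neq 0$. Then $G$ must contain $I$ by (R3) and be comparable with every $F_{s+j}$ by (R2). By the rank count, the only such flat is $G=\cl(I)$. In that case $(I,\{\cl(I)\}\cup\mathscr{F})$ is a maximal compatible pair, so $\deg_{W_V}((x_{\cl(I)}\cup x_{I,\mathscr{F}})\cap[W_V])=1$.

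For (I) and (II), fix $k$ and pick $e\notin F_{s+k}$. If $k<t$, take $e\in F_{s+k+1}\setminus F_{s+k}$; if $k=t$, take any $e\notin F_{s+t}$. By (R1),
\[x_{F_{s+k}}=y_e-\sum_{G\not\ni e,\,G\neq F_{s+k}}x_G .\]
Multiply this by $x_{F_{s+k}}\cup x_{I,\mathscr{F}}$. The term $y_e x_{F_{s+k}}$ vanishes by (R3). Among the remaining $G\not\ni e$, the observation above leaves only two kinds of nonzero terms:
\begin{itemize}
\item $G=\cl(I)$, which does not contain $e$ because $\cl(I)\subseteq F_{s+k}$; it contributes $-1$;
\item $G=F_{s+j}$ with $j<k$, since these are the flag members not containing $e$ by the choice of $e$; they contribute $-\deg((x_{F_{s+j}}\cup x_{I,\mathscr{F}})\cap[W_V])$.
\end{itemize}

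Induct on $k$. For $k=1$ there are no $j<k$, so the degree is $-1$, which is (I). For $k\ge 2$ the degree is $-1-(-1)-0-\cdots-0=0$ by the inductive hypothesis, which is (II).

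For (III), expand $\alpha\cup x_{I,\mathscr{F}}=\sum_F a(F)\,x_F\cup x_{I,\mathscr{F}}$. By the observation, the only flats outside $\mathscr{F}$ contributing are $\cl(I)$, with degree $1$. The flags' own flats contribute via (I) and (II), giving $-a(F_{s+1})$. The result is $a(\cl(I))-a(F_{s+1})$. When $\mathscr{F}$ is empty, only the $\cl(I)$ term survives.

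The main obstacle I expect is choosing $e$ so that the substitution does not reintroduce squares of higher flag members. Choosing $e\in F_{s+k+1}\setminus F_{s+k}$ handles this, because then only lower flats appear and the induction closes.
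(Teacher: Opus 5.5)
Your proof is correct. For (III) it is the paper's argument. The paper partitions the proper flats into four groups: $\mathscr{F}$, $\{\cl(I)\}$, the flats not containing $I$ (killed by (R3)), and the remaining flats containing $I$ of rank $>s$ (killed by (R2)). That partition is exactly your key observation.

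For (I) and (II) you take a different route.

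\emph{The paper's route.} It rewrites $x_{F_{s+k}}$ using the difference $y_b-y_a$, with $b\in F_{s+k+1}\setminus F_{s+k}$. It takes $a\in F_{s+1}\setminus\cl(I)$ when $k=1$, and $a\in F_{s+k}\setminus F_{s+k-1}$ when $k\ge 2$. Flats containing neither $a$ nor $b$ cancel, so $\cl(I)$ and the lower flag members never enter, and each case is computed on its own. For $k=1$ this gives $x_{F_{s+1}}\cdot x_{I,\mathscr{F}}=-y_a\cdot x_{I,\mathscr{F}}=-x_{I\cup a,\mathscr{F}}$. For $k\ge 2$ the surviving term $y_a\cdot x_{I,\mathscr{F}}$ vanishes by (R3), because $a\notin F_{s+k-1}$.

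\emph{Your route.} You use a single $y_e$, so $\cl(I)$ and $F_{s+1},\dots,F_{s+k-1}$ survive. This gives the recursion $d_k=-1-\sum_{j<k}d_j$, where $d_j$ is the degree in question, and your induction solves it. In your version the $-1$ comes from the maximal monomial $x_{I,\{\cl(I)\}\cup\mathscr{F}}$ rather than from $x_{I\cup a,\mathscr{F}}$.

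\emph{Trade-off.} Your version is more economical: one lemma disposes of all the vanishing terms in (I), (II) and (III), whereas the paper repeats a rank-by-rank case analysis in each part. The paper's choice of $y_b-y_a$ avoids the induction and makes the cases independent of one another.

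One slip to fix: multiply the rewritten relation by $x_{I,\mathscr{F}}$, not by $x_{F_{s+k}}\cup x_{I,\mathscr{F}}$. The factor $x_{F_{s+k}}$ is already inside $x_{I,\mathscr{F}}$, and the product you wrote lies in $A^{r+1}(W_V)=0$. The rest of your computation already uses the correct multiplier.
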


The proof of Lemma \ref{lem:chow_ring_computations} is lengthy, so it has been relegated to Appendix A.

Given a flat $F$ of $M_V,$ or the set difference of two flats $F_2 \setminus F_1$, let $\mathcal{L}_1(F)$ and $\mathcal{L}_1(F_2 \setminus F_1)$ denote the collection of rank-1 flats of $M_V$ that partition $F$ and $F_2 \setminus F_1$, respectively (cf. Lemma \ref{lem:flat_disjoint_union}).

\begin{lemma}
\label{lem:line_bundle_construction}
Let $a: \mathcal{L}_{\neq r}(M_V) \to \mathbb{Z}$ be a set function such that, for all proper flats $F$ of $M_V$, we have
\[a(F) \ =\sum_{G \in \mathcal{L}_1(E \setminus F)} \big( a(\emptyset) - a(G)\big).\]
Then, there exists a set function $b:E \to \mathbb{Z}$ such that
\[(\iota \circ \pi)^* \sum_{e \in E} b(e) \tilde{y}_e  \quad = \sum_{F \in \mathcal{L}_{\neq r}(M_V) }a(F)x_F.\]
\end{lemma}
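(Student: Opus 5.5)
The plan is to compute the pullback of each generator $\tilde{y}_e$ explicitly, and then choose $b$ so that the coefficients match those of $\sum a(F)x_F$.

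First I compute $(\iota\circ\pi)^*\tilde{y}_e$. Diagram (\ref{dia:commutative_square}) commutes, so $\iota\circ\pi=\rho\circ\kappa$. Functoriality of pullback in operational Chow cohomology then gives
\[(\iota\circ\pi)^*\tilde{y}_e=\kappa^*\rho^*\tilde{y}_e=\kappa^*\overline{y}_e=y_e .\]
By relation (\ref{eq:ideal_description_1}), $y_e=\sum_{F\not\ni e}x_F$ in $A^1(W_V)$, where the sum runs over proper flats $F$ of $M_V$ not containing $e$. Hence, for any $b:E\to\mathbb{Z}$,
\[(\iota\circ\pi)^*\sum_{e\in E}b(e)\tilde{y}_e=\sum_{F\in\mathcal{L}_{\neq r}(M_V)}\Big(\sum_{e\in E\setminus F}b(e)\Big)x_F .\]
It therefore suffices to find $b$ with $\sum_{e\in E\setminus F}b(e)=a(F)$ for every proper flat $F$.

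Next I construct $b$. By Lemma \ref{lem:flat_disjoint_union}, since $M_V$ is loopless, $E$ is the disjoint union of the rank-1 flats of $M_V$. For each rank-1 flat $G$, I fix one distinguished element $g_G\in G$. I set $b(g_G)=a(\emptyset)-a(G)$, and $b(e)=0$ for every non-distinguished element $e$. With this choice, for any subset $S\subseteq E$ that is a union of rank-1 flats, $\sum_{e\in S}b(e)=\sum_{G\subseteq S}(a(\emptyset)-a(G))$.

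The remaining step, and the only one needing care, is to check that $E\setminus F$ is exactly the disjoint union of the rank-1 flats in $\mathcal{L}_1(E\setminus F)$. Let $G$ be a rank-1 flat. Then $G\cap F$ is a flat contained in $G$, so it is either $\cl(\emptyset)=\emptyset$ or $G$ itself. Thus each rank-1 flat lies entirely inside $F$ or entirely inside $E\setminus F$. Combined with the partition of $E$ into rank-1 flats, this shows that $E\setminus F$ is partitioned by the rank-1 flats it contains. Consequently
\[\sum_{e\in E\setminus F}b(e)=\sum_{G\in\mathcal{L}_1(E\setminus F)}\big(a(\emptyset)-a(G)\big)=a(F)\]
by hypothesis. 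Substituting into the displayed formula completes the proof.
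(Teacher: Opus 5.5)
Your proof is correct and follows the paper's argument essentially step for step: pull back through $\rho\circ\kappa$ to get $y_e$, expand with relation (R1), reindex the double sum, and take $b$ supported on one chosen representative of each rank-1 flat $G$, with value $a(\emptyset)-a(G)$. The paper leaves implicit, through its definition of $\mathcal{L}_1(E\setminus F)$, that $E\setminus F$ is partitioned by rank-1 flats, and your check that $G\cap F\in\{\emptyset,G\}$ for every rank-1 flat $G$ supplies that justification.
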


\begin{proof}
    For each rank-1 flat $G$ of $M_V,$ choose a representative element $e_G \in G.$ Define $b: E \to \mathbb{Z}$ to be the function that sends the element $e_G$ to the integer $a(\emptyset) - a(G)$, and all other elements to 0. Then, by the commutativity of Diagram \ref{dia:commutative_square} and the identity $(\rho \circ \kappa)^* \tilde{y}_e = y_e$, we have
    \begin{equation*}
    \label{eq:line_bundle_construction}
    (\iota \circ \pi)^* \sum_{e \in E} b(e) \tilde{y}_e = (\rho \circ \kappa)^* \sum_{e \in E} b(e) \tilde{y}_e =
    \sum_{e \in E} b(e) y_e. 
    \end{equation*}
    By the set of relations (\ref{eq:ideal_description_1}) of $A^*(W_V)$, we have
    \[\sum_{e \in E} b(e) y_e = \sum_{e\in E} \left( b(e)\sum_{F \not\ni e} x_F \right)=\sum_{e\in E} \sum_{F \not\ni e} \left(b(e) x_F\right).\]
    We reindex the double sum to get
    \[\sum_{e\in E} \sum_{F \not\ni e} \left(b(e) x_F\right) = \sum_{F \in \mathcal{L}_{\neq r}(M_V)} \: \sum_{e \in E \setminus F} (b(e)x_F) = \sum_{F \in \mathcal{L}_{\neq r}(M_V)} \left( \sum_{e \in E \setminus F} b(e) \right) x_F.\]
    By how we defined $b$, we have
    \[\sum_{F \in \mathcal{L}_{\neq r}(M_V)} \left( \sum_{e \in E \setminus F} b(e) \right) x_F = \sum_{F \in \mathcal{L}_{\neq r}(M_V)} \left( \sum_{G \in \mathcal{L}_1(E \setminus F)} (a(\emptyset) - a(G)) \right) x_F.\]
    But by the hypothesis, we have
    \[\sum_{F \in \mathcal{L}_{\neq r}(M_V)} \left( \sum_{G \in \mathcal{L}_1(E \setminus F)} (a(\emptyset) - a(G)) \right) x_F = \sum_{F \in \mathcal{L}_{\neq r}(M_V)} a(F) x_F. \qedhere\]
\end{proof}

\begin{lemma}
\label{lem:alternate_equations}

Let $a: \mathcal{L}_{\neq r}(M_V) \to \mathbb{Z}$ be a set function such that, for all pairs of proper flats $F_1, F_2$ such that $F_1 \subsetneq F_2,$ and
$\rk(F_2) =\rk(F_1) + 1,$ we have
\[a(F_1) - a(F_2) \ = \sum_{G \in \mathcal{L}_1(F_2 \setminus F_1)}(a(\emptyset) - a(G)),\]
and for all corank-1 flats $F_3,$ we have
\[a(F_3) \ =\sum_{G \in \mathcal{L}_1(E \setminus F_3)}(a(\emptyset) - a(G)).\]
Then, for all proper flats $F$ of rank at least 1, we have
\[a(F) \ =\sum_{G \in \mathcal{L}_1(E \setminus F)} \big( a(\emptyset) - a(G)\big).\]
\end{lemma}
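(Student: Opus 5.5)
We argue by downward induction on the rank of $F$, starting from the corank-1 flats and descending to the rank-1 flats. The base case $\rk(F) = r-1$ is exactly the second hypothesis of the lemma, applied with $F_3 = F$.

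For the inductive step, let $F$ be a proper flat with $1 \le \rk(F) \le r-2$, and assume the formula holds for all proper flats of rank $\rk(F)+1$. We choose any flat $F_2$ that covers $F$, i.e.\ $F \subsetneq F_2$ and $\rk(F_2) = \rk(F)+1$. For instance, take $F_2 = \cl(F \cup e)$ for some $e \notin F$. Since $\rk(F) \le r-2$, the flat $F_2$ is still proper, so the first hypothesis applies to the pair $F_1 = F$, $F_2$:
\[a(F) = a(F_2) + \sum_{G \in \mathcal{L}_1(F_2 \setminus F)} \big(a(\emptyset) - a(G)\big).\]
By the inductive hypothesis,
\[a(F_2) = \sum_{G \in \mathcal{L}_1(E \setminus F_2)} \big(a(\emptyset) - a(G)\big).\]
Adding the two expressions gives
\[a(F) = \sum_{G \in \mathcal{L}_1(E \setminus F_2) \,\sqcup\, \mathcal{L}_1(F_2 \setminus F)} \big(a(\emptyset) - a(G)\big),\]
and it remains to check that this index set is exactly $\mathcal{L}_1(E \setminus F)$.

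This is the only step needing care. By Lemma \ref{lem:flat_disjoint_union}, every rank-1 flat $G$ is contained in a flat $H$ or disjoint from it: if $G \cap H \ni e$, then $\cl(e) = G \subseteq H$. Hence $E \setminus F$ is the disjoint union of the rank-1 flats not contained in $F$, and these split into those contained in $F_2$ (giving $\mathcal{L}_1(F_2 \setminus F)$) and those disjoint from $F_2$ (giving $\mathcal{L}_1(E \setminus F_2)$). Therefore $\mathcal{L}_1(E\setminus F) = \mathcal{L}_1(F_2\setminus F)\sqcup \mathcal{L}_1(E\setminus F_2)$, and the displayed sum is the claimed formula for $a(F)$.

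The argument uses $M_V$ loopless (via Lemma \ref{lem:flat_disjoint_union}) and otherwise presents no real obstacle. The restriction to rank at least $1$ matches the induction: every flat of rank between $1$ and $r-1$ is reached by descending from corank-1 flats.
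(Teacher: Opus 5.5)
Your proof is correct and takes essentially the same route as the paper: induction on corank starting from the corank-1 hypothesis. In the inductive step you choose a flat $F_2$ covering $F$ and add the covering hypothesis to the inductive hypothesis for $F_2$; you also justify the partition $\mathcal{L}_1(E\setminus F)=\mathcal{L}_1(F_2\setminus F)\sqcup\mathcal{L}_1(E\setminus F_2)$ via looplessness, which the paper uses without comment.
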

\begin{proof}
We do induction on the corank of $F$. The base case is when $F$ is a corank-$1$ flat; in this case, the conclusion of the lemma automatically follows from the hypothesis. For the inductive step, let the conclusion be true for corank-$k$ flats, and let $F$ be a corank-$(k+1)$ flat. Choose $F'$ to be a corank-$k$ flat that contains $F$. Then, by the hypothesis of the lemma, we have
\[a(F) - a(F') = \sum_{G \in \mathcal{L}_1(F' \setminus F)}(a(\emptyset) - a(G)), \]
and by the inductive hypothesis, we have
\[a(F')=\sum_{G \in \mathcal{L}_1(E \setminus F')} \big( a(\emptyset) - a(G)\big).\]
So we have
\begin{equation*}
\begin{split}
a(F)= \big(a(F) - a(F') \big) + a(F') & = \sum_{G \in \mathcal{L}_1(F' \setminus F)}(a(\emptyset) - a(G)) \quad + \sum_{G \in \mathcal{L}_1(E \setminus F')} \big( a(\emptyset) - a(G)\big) \\ & =  \sum_{G \in \mathcal{L}_1(E \setminus F)} \big( a(\emptyset) - a(G)\big),
\end{split}
\end{equation*}
completing the inductive step.
\end{proof}

\begin{definition}
\label{def:minkowski_weight}
Let $\Sigma$ be a unimodular fan, and let $\Sigma(k)$ denote the set of $k$-dimensional cones of $\Sigma$. A $k$-dimensional \textit{Minkowski weight} on $\Sigma$ is a set function $w: \Sigma(k) \to \mathbb{Z},$ such that, for every cone $\tau \in \Sigma(k-1),$ we have
\[\sum_{\sigma \supset \tau} w(\sigma) v_{\sigma, \tau} \in \Span(\tau),\]
where the sum is over all $\sigma \in \Delta(k)$ such that $\sigma \supset \tau$, and $v_{\sigma, \tau}$ denotes the first lattice point along the unique ray of $\sigma$ not contained in $\tau.$ Let $\MW_k(\Sigma)$ denote the group of $k$-dimensional Minkowski weights on $\Sigma.$
\end{definition}

\begin{theorem}
\label{thm:fulton_sturmfels_iso}
\cite[Theorem 3.1]{fulton_sturmfels}
Let $\Sigma$ be an $n$-dimensional, complete, unimodular fan, and let $X(\Sigma)$ denote its corresponding toric variety. For $0 \leq k \leq n$, we have an isomorphism from $A^{k}(X(\Sigma))$ to $\mathrm{MW}_{n-k}$ given by
\begin{equation*}
\label{eq:chow_minkowski_iso}
\alpha\mapsto \Big[\sigma \mapsto\deg_{X(\Sigma)}(\alpha \cap [V(\sigma)])\Big],
\end{equation*}
where $[V(\sigma)]$ denotes the Chow homology class of the torus orbit closure corresponding to the cone $\sigma.$
\end{theorem}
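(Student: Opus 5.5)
The plan is to treat this as the combination of two facts: a presentation of the Chow groups $A_*(X(\Sigma))$ by torus orbit closures and explicit relations, and a Kronecker duality statement identifying $A^k(X(\Sigma))$ with $\operatorname{Hom}(A_k(X(\Sigma)),\mathbb{Z})$. Write $X = X(\Sigma)$. The map in the statement is the composite of the Kronecker duality map $A^k(X)\to\operatorname{Hom}(A_k(X),\mathbb{Z})$, $\alpha\mapsto[\beta\mapsto \deg_X(\alpha\cap\beta)]$, with the evaluation of a homomorphism on the generators $[V(\sigma)]$. Here $\sigma$ ranges over $\Sigma(n-k)$, since $\dim V(\sigma)=n-\dim\sigma = k$.

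\textbf{Step 1: presentation of $A_k(X)$.} For any toric variety, $A_k(X)$ is generated by the classes $[V(\sigma)]$ with $\sigma\in\Sigma(n-k)$, because $X$ is a disjoint union of torus orbits. The relations are generated by divisors of characters on the $(k+1)$-dimensional orbit closures. Concretely, for $\tau\in\Sigma(n-k-1)$ and $u\in\tau^\perp\cap\mathbb{Z}^n$, the rational function $\chi^u$ on $V(\tau)$ has
\[\operatorname{div}(\chi^u)=\sum_{\sigma\supset\tau}\langle u, v_{\sigma,\tau}\rangle\,[V(\sigma)].\]
Unimodularity lets us use the first lattice point $v_{\sigma,\tau}$ here without any index corrections. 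I will cite the standard result (Fulton--Sturmfels, Proposition 2.1) that these divisors generate all relations.

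\textbf{Step 2: dualize.} A function $c:\Sigma(n-k)\to\mathbb{Z}$ defines a homomorphism $A_k(X)\to\mathbb{Z}$ if and only if
\[\sum_{\sigma\supset\tau}c(\sigma)\langle u,v_{\sigma,\tau}\rangle=0\quad\text{for all } \tau\in\Sigma(n-k-1),\ u\in\tau^\perp.\]
This condition says exactly that $\sum_{\sigma\supset\tau}c(\sigma)v_{\sigma,\tau}$ is annihilated by $\tau^\perp$, that is, lies in $\Span(\tau)$. So $\operatorname{Hom}(A_k(X),\mathbb{Z})\cong \MW_{n-k}(\Sigma)$, which is the definition in Definition~\ref{def:minkowski_weight} with the dimension index matched up.

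\textbf{Step 3: Kronecker duality, the main obstacle.} It remains to show that $A^k(X)\to\operatorname{Hom}(A_k(X),\mathbb{Z})$ is an isomorphism when $X$ is complete; here is where completeness enters. My first approach uses smoothness and completeness. Poincar\'e duality identifies $A^k(X)\cong A_{n-k}(X)$. By Danilov/Jurkiewicz, $A_*(X)$ is a free abelian group. Choosing a shelling order on the maximal cones (possible for a complete simplicial fan after passing to a generic linear functional), one gets an affine paving of $X$. The classes $[V(\sigma)]$ for the cells form a basis, and the intersection pairing $A_{n-k}\times A_k\to\mathbb{Z}$ is unimodular: with respect to the paving order, the pairing matrix between a basis and the corresponding dual cells is triangular with $1$'s on the diagonal. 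Nondegeneracy and integrality of this perfect pairing is the delicate point. As a fallback I would invoke the general theorem of Fulton--MacPherson--Sottile--Sturmfels that Kronecker duality holds for any complete variety with a solvable group action having finitely many orbits. Combining Steps 1--3 gives the stated isomorphism.
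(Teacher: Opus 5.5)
Your proposal is correct. Your Steps 1 and 2, together with the fallback in Step 3, are essentially the Fulton--Sturmfels argument that the paper cites; the paper itself gives no proof. That argument presents $A_k(X(\Sigma))$ by the classes $[V(\sigma)]$ modulo the divisors $\operatorname{div}(\chi^u)$ on the $V(\tau)$, dualizes to get the balancing condition, and invokes the Kronecker duality theorem of Fulton--MacPherson--Sottile--Sturmfels for complete schemes with finitely many orbits of a connected solvable group. With that citation your proof is complete.

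Your preferred route in Step 3 is genuinely different. It uses that unimodularity makes $X(\Sigma)$ smooth, so Poincar\'e duality reduces Kronecker duality to perfectness of the integral pairing $A_{n-k}(X)\times A_k(X)\to\mathbb{Z}$. That is more hands-on, but it depends essentially on unimodularity. The Fulton--Sturmfels route covers every complete toric variety, singular ones included, if $v_{\sigma,\tau}$ is replaced by a lattice point generating the appropriate quotient lattice.

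As written, your argument for perfectness is only secure in the projective case. Ordering the maximal cones by a generic linear functional is a line-shelling argument, and that needs the fan to be polytopal. For complete non-projective smooth toric varieties, the cell decomposition induced by a one-parameter subgroup need not be filtrable, so the triangularity of the pairing matrix is not available in general. To keep the smooth route for all complete unimodular fans, drop the paving. Instead use Danilov's result that for smooth complete toric $X$ the cycle map $A^*(X)\to H^{2*}(X;\mathbb{Z})$ is an isomorphism onto torsion-free cohomology, and then apply topological Poincar\'e duality on the compact manifold $X$. Otherwise, rely on the FMSS theorem as you suggest.

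For this paper the point does not matter. The theorem is only applied to $\Gamma_E$ and $\Delta_E$, whose toric varieties are $(\mathbb{P}^1)^E$ and the stellahedral variety, and both are projective.
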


So, by Theorem \ref{thm:fulton_sturmfels_iso}, we can treat Minkowski weights on $\Delta_E$ and $\Gamma_E$ as operational Chow cohomology classes on $W_{\mathbb{C}^E}$ and $Y_{\mathbb{C}^E}$, respectively. Let $\overline{w}_{M_V}$ denote the $r$-dimensional Minkowski weight on the fan $\Delta_E$ defined by the rule
\[\overline{\sigma}_{I, \mathscr{F}} \mapsto 
\begin{cases}
    1 & (I, \mathscr{F}) \text{ is a compatible pair of } M_V \\
    0 & \text{otherwise}.
\end{cases}\]
(By \cite[Proposition 2.8]{semismall}, the weight $\overline{w}_{M_V}$ satisfies the ``balancing condition'' in Definition \ref{def:minkowski_weight}.)
For any subset $S \subseteq E,$ let $\tilde{w}_S$ denote the 1-dimensional Minkowski weight on the fan $\Gamma_E$ defined by the rule
\[\tilde{\sigma}_{S_1,S_2} \mapsto 
\begin{cases}
    1 & S_1 \cup S_2 = \{e\} \text{ for some } e \in S \\   
    0 & \text{otherwise}.
\end{cases}\]

\begin{lemma}
\label{lem:pushforward_wonderful_model}
\cite[Corollary 5.11]{stellahedral}
We have $\kappa_*[W_V] = \overline{w}_{M_V} \cap [W_{\mathbb{C}^E}]$.
\end{lemma}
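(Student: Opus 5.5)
The plan is to compare both sides of the identity in $A_r(W_{\mathbb{C}^E})$ by pairing them against the torus-invariant classes, and then conclude with the injectivity part of Theorem \ref{thm:fulton_sturmfels_iso}. Since $W_{\mathbb{C}^E}$ is smooth and complete, Poincar\'e duality identifies $A_r(W_{\mathbb{C}^E})$ with $A^{|E|-r}(W_{\mathbb{C}^E})$. Theorem \ref{thm:fulton_sturmfels_iso} identifies the latter with $\MW_r(\Delta_E)$. Concretely, a class $\beta = \alpha \cap [W_{\mathbb{C}^E}]$ is determined by the function
\[\overline{\sigma}_{I,\mathscr{F}} \mapsto \deg_{W_{\mathbb{C}^E}}\big(\alpha \cap [V(\overline{\sigma}_{I,\mathscr{F}})]\big) = \deg_{W_{\mathbb{C}^E}}\big(\overline{x}_{I,\mathscr{F}} \cap \beta\big),\]
where $(I,\mathscr{F})$ ranges over compatible pairs of the Boolean matroid with $|I|+|\mathscr{F}| = r$. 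The equality of the two expressions uses $\overline{x}_{I,\mathscr{F}} \cap [W_{\mathbb{C}^E}] = [V(\overline{\sigma}_{I,\mathscr{F}})]$ and the commutativity of cap products with operational classes. For $\beta = \overline{w}_{M_V} \cap [W_{\mathbb{C}^E}]$, this function is $\overline{w}_{M_V}$ itself by definition. So it suffices to show that
\[\deg_{W_{\mathbb{C}^E}}\big(\overline{x}_{I,\mathscr{F}} \cap \kappa_*[W_V]\big) = \overline{w}_{M_V}(\overline{\sigma}_{I,\mathscr{F}})\]
for every $r$-dimensional cone.

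By the projection formula, $\overline{x}_{I,\mathscr{F}} \cap \kappa_*[W_V] = \kappa_*\big(\kappa^*\overline{x}_{I,\mathscr{F}} \cap [W_V]\big)$, and degrees are preserved by proper pushforward. So the left side equals $\deg_{W_V}\big(\kappa^*\overline{x}_{I,\mathscr{F}} \cap [W_V]\big)$. I then split into two cases.

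\textbf{Case 1: $(I,\mathscr{F})$ is a compatible pair of $M_V$.} Then $I$ is independent and every $F \in \mathscr{F}$ is a flat, so $\kappa^*\overline{x}_{I,\mathscr{F}} = x_{I,\mathscr{F}}$. Since $\rk(I) = |I|$, the pair is maximal, and the stated degree normalization gives $1$.

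\textbf{Case 2: $(I,\mathscr{F})$ is not a compatible pair of $M_V$.} Then either some $F \in \mathscr{F}$ is not a flat of $M_V$, or $I$ is dependent in $M_V$. I must show $\kappa^*\overline{x}_{I,\mathscr{F}} = 0$.
\begin{itemize}
\item For the first failure, I will show $\kappa^*\overline{x}_S = 0$ when $S$ is not a flat. The geometric route is that the boundary divisor $V(\text{cone}(-v_{E\setminus S}))$ of $W_{\mathbb{C}^E}$ is disjoint from $W_V$ unless $S$ is a flat; this is the standard description of the augmented wonderful variety, as in \cite{semismall}. It is consistent with the presentation above, in which only flats appear.
\item For the second failure, if all members of $\mathscr{F}$ are flats containing $I$ but $I$ is dependent, the degree count forces $|I| > \rk(I)$. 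I will show $\prod_{i \in I} y_i = 0$ in $A^*(W_V)$ whenever $I$ is dependent. I would do this by rewriting each $y_i$ via (R1) and expanding: the relations (R2) and (R3) kill every monomial except those supported on a flag of flats $F$ with $F \not\ni i$, and a dependent set cannot be separated by a chain of flats of the required length. Alternatively, geometrically, $\prod_{i\in I}\overline{y}_i$ is represented by the intersection of the divisors $\{z_i = \infty\}$, which meets $W_V$ in codimension at most $\rk(I) < |I|$.
\end{itemize}

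Combining the two cases, the two Minkowski weights agree on every $r$-dimensional cone, hence the classes coincide. I expect Case 2 to be the main obstacle, specifically establishing $\kappa^*\overline{x}_S = 0$ for non-flats $S$ and the vanishing of $y_I$ for dependent $I$. The paper's presentation only records the images of generators indexed by flats, so this step needs either the geometric disjointness argument or a careful expansion using (R1)--(R3).
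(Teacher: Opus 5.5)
Your reduction is correct, and it takes a different route from the paper, which gives no argument and simply imports \cite[Corollary 5.11]{stellahedral}. Using Theorem \ref{thm:fulton_sturmfels_iso} and the projection formula, you reduce the identity to computing $\deg_{W_V}(\kappa^*\overline{x}_{I,\mathscr{F}}\cap[W_V])$ on each $r$-dimensional cone $\overline{\sigma}_{I,\mathscr{F}}$. Case 1 is then exactly the degree normalization for maximal compatible pairs. Organized as ``the weight of the dual of $\kappa_*[W_V]$ equals $\overline{w}_{M_V}$'', the argument also recovers the balancing of $\overline{w}_{M_V}$ rather than assuming it. The price is that you need $\kappa^*$ on \emph{every} generator of $A^*(W_{\mathbb{C}^E})$, not only the ones the paper records. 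Your Case 2 does not yet supply this: both vanishing claims are true, but neither justification works as written.

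For dependent $I$, the full-expansion sketch fails. Take $E=\{1,2,3\}$ and $V=\{(z_1,z_1,z_3)\}$, so $r=2$ and $(\{1,2\},\emptyset)$ indexes an $r$-dimensional cone with $I$ dependent. Then $y_1y_2=(x_\emptyset+x_{\{3\}})^2$, and no monomial of the expansion is killed by (R2) or (R3). The chain $\emptyset\subsetneq\{3\}$, with $1\notin\emptyset$ and $2\notin\{3\}$, is exactly the kind of separating chain you claim cannot exist, and $x_\emptyset x_{\{3\}}$ has degree $1$. The sum vanishes only after further use of the relations.

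Instead, expand a single factor. Pick a circuit $C\subseteq I$ and $e_0\in C$, and write $y_C:=\prod_{i\in C}y_i$. Then $y_C=\sum_{F\not\ni e_0}y_{C\setminus e_0}x_F$, and each term is zero by (R3), because $C\setminus e_0\subseteq F$ would force $e_0\in\cl(C\setminus e_0)\subseteq F$.

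Your geometric alternative is also not a proof as stated. An intersection of excess dimension does not make a product of divisor classes vanish. You would need to represent the classes by $\{z_i=c_i\}$, $i\in C$, with generic finite $c_i$, whose common locus misses $Y_V$.

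For a non-flat $S$, the fact that only flats index the generators of $A^*(W_V)$ says nothing about $\kappa^*\overline{x}_S$. A priori it could be a nonzero combination of the $x_F$ and $y_e$. What you need is that $W_V$ is disjoint from $D_S:=V(\operatorname{cone}(-v_{E\setminus S}))$, so that $\kappa^*\overline{x}_S=c_1(\mathcal{O}(D_S)|_{W_V})=0$. This must be imported or proved. One proof uses Tevelev's orbit criterion, which applies since $W_V$ is the closure of $V\cap(\mathbb{C}^*)^E$:
\begin{itemize}
\item $W_V$ meets the orbit of a cone $\sigma$ if and only if the tropicalization of $V\cap(\mathbb{C}^*)^E$ meets the relative interior of $\sigma$.
\item That tropicalization consists of the $w$ all of whose superlevel sets $\{e:w_e\ge c\}$ are flats of $M_V$.
\item For $w$ in the relative interior of $\overline{\sigma}_{I,\mathscr{F}}$, every member of $\mathscr{F}$ is such a superlevel set.
\end{itemize}
So if $S$ is not a flat, $W_V$ misses every orbit in $D_S$. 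With these two repairs your argument is complete.
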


\begin{lemma}
\label{lem:pushforward_weight}
Let $w$ be a 1-dimensional Minkowski weight on the fan $\Delta_E.$ Then,
\[\rho_*(w \cap [W_{\mathbb{C}^E}]) = w' \cap [Y_{\mathbb{C}^E}],\]
where $w'$ is the 1-dimensional Minkowski weight on the fan $\Delta_E$ defined by the rules
\[w'(\tilde{\sigma}_{e, \emptyset}) = w(\overline{\sigma}_{e,\emptyset}) \quad \text{and} \quad w'(\tilde{\sigma}_{\emptyset, e}) \mapsto w(\overline{\sigma}_{e,\emptyset}).\]
\end{lemma}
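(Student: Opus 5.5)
The plan is to pin down the class $\rho_*(w \cap [W_{\mathbb{C}^E}])$ by computing its degrees against the divisor classes $\tilde{y}_f$, and then read off its Minkowski weight using Theorem \ref{thm:fulton_sturmfels_iso}. (In the statement, $w'$ should be a Minkowski weight on $\Gamma_E$, since it is capped with $[Y_{\mathbb{C}^E}]$.)

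First, $Y_{\mathbb{C}^E}$ is smooth, so Poincar\'e duality gives a unique $\beta \in A^{|E|-1}(Y_{\mathbb{C}^E})$ with
\[\beta \cap [Y_{\mathbb{C}^E}] = \rho_*(w \cap [W_{\mathbb{C}^E}]) \in A_1(Y_{\mathbb{C}^E}).\]
By Theorem \ref{thm:fulton_sturmfels_iso}, $\beta$ is a $1$-dimensional Minkowski weight $w'$ on $\Gamma_E$. Its value on a ray $\tilde{\sigma}$ is $\deg_{Y_{\mathbb{C}^E}}(\beta \cap [V(\tilde{\sigma})])$. The rays of $\Gamma_E$ are $\tilde{\sigma}_{f,\emptyset}$ and $\tilde{\sigma}_{\emptyset,f}$ for $f \in E$. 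By the description of $A^*(Y_{\mathbb{C}^E})$, both $[V(\tilde{\sigma}_{f,\emptyset})]$ and $[V(\tilde{\sigma}_{\emptyset,f})]$ equal $\tilde{y}_f \cap [Y_{\mathbb{C}^E}]$. Hence
\[w'(\tilde{\sigma}_{f,\emptyset}) = w'(\tilde{\sigma}_{\emptyset,f}) = \deg_{Y_{\mathbb{C}^E}}(\tilde{y}_f \cap \rho_*(w \cap [W_{\mathbb{C}^E}])).\]
This already explains why the two rules in the statement give the same value.

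Next, we compute this degree. Since $\rho$ is proper, the projection formula and compatibility of degree with proper pushforward give
\[\deg_{Y_{\mathbb{C}^E}}(\tilde{y}_f \cap \rho_*(w \cap [W_{\mathbb{C}^E}])) = \deg_{W_{\mathbb{C}^E}}(\rho^*\tilde{y}_f \cap (w \cap [W_{\mathbb{C}^E}])).\]
We have $\rho^*\tilde{y}_f = \overline{y}_f$. Operational classes commute under cap product, so the right-hand side becomes $\deg_{W_{\mathbb{C}^E}}(w \cap (\overline{y}_f \cap [W_{\mathbb{C}^E}]))$. By the description of $A^*(W_{\mathbb{C}^E})$, taking $I = \{f\}$ and $\mathscr{F} = \emptyset$, we get $\overline{y}_f \cap [W_{\mathbb{C}^E}] = [V(\overline{\sigma}_{f,\emptyset})]$. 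Theorem \ref{thm:fulton_sturmfels_iso} applied on $\Delta_E$ then gives
\[\deg_{W_{\mathbb{C}^E}}(w \cap [V(\overline{\sigma}_{f,\emptyset})]) = w(\overline{\sigma}_{f,\emptyset}).\]
Combining this with the previous paragraph yields $w'(\tilde{\sigma}_{f,\emptyset}) = w'(\tilde{\sigma}_{\emptyset,f}) = w(\overline{\sigma}_{f,\emptyset})$, which is the claim.

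The main obstacle is bookkeeping rather than a new idea. We must confirm the two Poincar\'e-duality identifications used above: $\tilde{y}_f$ is dual to the orbit closure of each of the rays $\pm v_f$, and $\overline{y}_f$ is dual to the orbit closure of the ray $v_f$ in $\Delta_E$. We must also justify the commutation $\overline{y}_f \cap (w \cap [W_{\mathbb{C}^E}]) = w \cap (\overline{y}_f \cap [W_{\mathbb{C}^E}])$, which holds because operational classes commute. As a consistency check, balancing for $w'$ at the zero cone of $\Gamma_E$ forces $w'(\tilde{\sigma}_{f,\emptyset}) = w'(\tilde{\sigma}_{\emptyset,f})$, matching the computation.
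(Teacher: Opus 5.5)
Your proposal is correct and follows essentially the same route as the paper. Both arguments run the same way: identify $\rho_*(w\cap[W_{\mathbb{C}^E}])$ with a Minkowski weight $w'$ on $\Gamma_E$ via Poincar\'e duality and Theorem \ref{thm:fulton_sturmfels_iso}, use that $\tilde{y}_f$ is dual to both $[V(\tilde{\sigma}_{f,\emptyset})]$ and $[V(\tilde{\sigma}_{\emptyset,f})]$, then apply the projection formula, $\rho^*\tilde{y}_f=\overline{y}_f$, commutativity of operational classes, and $\overline{y}_f\cap[W_{\mathbb{C}^E}]=[V(\overline{\sigma}_{f,\emptyset})]$. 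You are also right that $w'$ should be a weight on $\Gamma_E$ rather than $\Delta_E$. Commutativity is used once more, silently, when you pass from $\deg(\beta\cap(\tilde{y}_f\cap[Y_{\mathbb{C}^E}]))$ to $\deg(\tilde{y}_f\cap\rho_*(w\cap[W_{\mathbb{C}^E}]))$; the paper writes that step out explicitly.
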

\begin{proof}
The pushforward $\rho_*(w \, \cap \, [W_{\mathbb{C}^E}])$ is a degree-1 Chow homology class in $A_1(Y_{\mathbb{C}^E}).$ The variety is $Y_{\mathbb{C}^E}$ is smooth, so by  the Poincar\'e duality isomorphism, there exists a operational Chow cohomology class in $A^{n-1}(Y_{\mathbb{C}^E})$ that is Poincar\'e dual to $\rho_*(w \cap [W_{\mathbb{C}^E}])$. Using Theorem \ref{thm:fulton_sturmfels_iso}, we think of this cohomology class as a 1-dimensional Minkowski weight $w'.$ By Theorem \ref{thm:fulton_sturmfels_iso}, we have
\[w'(\tilde{\sigma}_{e, \emptyset}) = \deg_{Y_{\mathbb{C}^E}}(w' \cap [V(\tilde{\sigma}_{e, \emptyset})]).\]
The cohomology class $\tilde{y}_e \in A^1(W_V)$ is the Poincar\'e dual of the homology class $[V(\tilde{\sigma}_{e, \emptyset})],$ so we have
\[\deg_{Y_{\mathbb{C}^E}}(w' \cap [V(\tilde{\sigma}_{e, \emptyset})]) = \deg_{Y_{\mathbb{C}^E}}(w' \cap ( \tilde{y}_e \cap [Y_{\mathbb{C}^E}])).\]
By the $A^*(Y_{\mathbb{C}^E})$-module structure of $A_*(Y_{\mathbb{C}^E})$ and the commutativity of $A^*(Y_{\mathbb{C}^E})$, we have
\[\deg_{Y_{\mathbb{C}^E}}(w' \cap ( \tilde{y}_e \cap [Y_{\mathbb{C}^E}])) =
\deg_{Y_{\mathbb{C}^E}}(\tilde{y}_e \cap ( w' \cap [Y_{\mathbb{C}^E}])).\]
Then, by the construction of $w'$, we have
\[\deg_{Y_{\mathbb{C}^E}}(\tilde{y}_e \cap ( w' \cap [Y_{\mathbb{C}^E}])) = \deg_{Y_{\mathbb{C}^E}}(\tilde{y}_e \cap  \rho_*(w \cap [W_{\mathbb{C}^E}])).\]
By the projection formula, we have
\[\deg_{Y_{\mathbb{C}^E}}(\tilde{y}_e \cap  \rho_*(w \cap [W_{\mathbb{C}^E}])) = \deg_{Y_{\mathbb{C}^E}}\big(\rho_*(\rho^*\tilde{y}_e \cap  (w \cap [W_{\mathbb{C}^E}]))\big),\]
and we use the identities $\deg_{Y_{\mathbb{C}^E}} \circ \; \rho_* = \deg_{W_{\mathbb{C}^E}}$ and $\rho^*\tilde{y}_e = \overline{y}_e$ to get
\[\deg_{Y_{\mathbb{C}^E}}(\rho_*(\rho^*\tilde{y}_e \cap  (w \cap [W_{\mathbb{C}^E}]))) = \deg_{W_{\mathbb{C}^E}}(\overline{y}_e \cap  (w \cap [W_{\mathbb{C}^E}])).\]
By the $A^*(W_{\mathbb{C}^E})$-module structure of $A_*(W_{\mathbb{C}^E})$ and the commutativity of $A^*(W_{\mathbb{C}^E})$, we have
\[\deg_{W_{\mathbb{C}^E}}(\overline{y}_e \cap  (w \cap [W_{\mathbb{C}^E}])) = \deg_{W_{\mathbb{C}^E}}(w \cap  (\overline{y}_e \cap [W_{\mathbb{C}^E}])), \]
and since $\overline{y}_e \cap [W_{\mathbb{C}^E}] = [V(\overline{\sigma}_{e, \emptyset})],$ we get
\[\deg_{W_{\mathbb{C}^E}}(w \cap  (\overline{y}_e \cap [W_{\mathbb{C}^E}])) = \deg_{W_{\mathbb{C}^E}}(w \cap [V(\overline{\sigma}_{e, \emptyset})]).\]
But by Theorem \ref{thm:fulton_sturmfels_iso}, we have that
\[\deg_{W_{\mathbb{C}^E}}(w \cap [V(\overline{\sigma}_{e, \emptyset})]) = w(\overline{\sigma}_{e, \emptyset}),\]
so we have found that
\[w'(\tilde{\sigma}_{e, \emptyset}) = w(\overline{\sigma}_{e,\emptyset}).\]
The Chow homology class $[V(\tilde{\sigma}_{\emptyset, e})]$ is the same as $[V(\tilde{\sigma}_{e, \emptyset})]$,
so the rule $w'(\tilde{\sigma}_{\emptyset, e}) = w(\overline{\sigma}_{e,\emptyset})$ follows by a near-identical argument.    
\end{proof}

\begin{lemma}
\label{lem:pushforward_formula}
Let $(I, \mathscr{F})$ be a nearly maximal compatible pair of $M_V$. Let $F$ denote the smallest flat in $\mathscr{F}$. (If $\mathscr{F}$ is empty, then set $F := E.$) Then, we have
\[(\iota \circ \pi)_*(x_{I,\mathscr{F}} \cap [W_V]) = \tilde{w}_{F \setminus \cl(I)} \cap [Y_{\mathbb{C}^E}].\]
\end{lemma}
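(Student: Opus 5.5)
Because $\iota\circ\pi=\rho\circ\kappa$ by Diagram (\ref{dia:commutative_square}), we have $(\iota\circ\pi)_* = \rho_*\kappa_*$. We therefore first compute $\kappa_*(x_{I,\mathscr{F}}\cap[W_V])$ as a $1$-dimensional Minkowski weight on $\Delta_E$, and then push it forward with Lemma \ref{lem:pushforward_weight}.

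\textbf{The class $\kappa_*(x_{I,\mathscr{F}}\cap[W_V])$.} By construction $x_{I,\mathscr{F}}=\kappa^*\overline{x}_{I,\mathscr{F}}$, where $\overline{x}_{I,\mathscr{F}}$ is formed from the same $I$ and $\mathscr{F}$, now regarded as a compatible pair of the Boolean matroid. The projection formula and Lemma \ref{lem:pushforward_wonderful_model} then give
\[\kappa_*(x_{I,\mathscr{F}}\cap[W_V])=\overline{x}_{I,\mathscr{F}}\cap(\overline{w}_{M_V}\cap[W_{\mathbb{C}^E}]).\]
Call the corresponding $1$-dimensional Minkowski weight $w$. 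By Theorem \ref{thm:fulton_sturmfels_iso} and the computation pattern in the proof of Lemma \ref{lem:pushforward_weight}, we get
\[w(\overline{\sigma}_{e,\emptyset})=\deg_{W_{\mathbb{C}^E}}\big(\overline{w}_{M_V}\cap(\overline{y}_e\,\overline{x}_{I,\mathscr{F}}\cap[W_{\mathbb{C}^E}])\big).\]
The product $\overline{y}_e\overline{x}_{I,\mathscr{F}}$ can be evaluated in cases. If $e\in I$, it vanishes because $\overline{y}_e^2=0$: the relation $\tilde y_e^2=0$ pulls back under $\rho^*$. If $e\notin F$, it vanishes by (\textoverline{R3}), since $F$ is the smallest flat of $\mathscr{F}$, or by $\tilde y^2$-type vanishing when $\mathscr{F}=\emptyset$. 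If $e\in F\setminus I$, then $(I\cup e,\mathscr{F})$ is a compatible pair of the Boolean matroid of the right dimension $r$, so $\overline{y}_e\overline{x}_{I,\mathscr{F}}=\overline{x}_{I\cup e,\mathscr{F}}$ is Poincar\'e dual to $[V(\overline{\sigma}_{I\cup e,\mathscr{F}})]$. In that case the degree equals $\overline{w}_{M_V}(\overline{\sigma}_{I\cup e,\mathscr{F}})$, which is $1$ exactly when $I\cup e$ is independent in $M_V$, and $0$ otherwise.

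\textbf{Independence of $I\cup e$.} For $e\in F\setminus I$, the set $I\cup e$ is independent in $M_V$ if and only if $e\notin\cl(I)$. Compatibility with $\mathscr{F}$ is automatic because $I\cup e\subseteq F$, and every flat of $\mathscr{F}$ contains $F$. Hence $w(\overline{\sigma}_{e,\emptyset})=1$ if $e\in F\setminus\cl(I)$, and $w(\overline{\sigma}_{e,\emptyset})=0$ otherwise. Elements of $\cl(I)\setminus I$ give $0$ because $I\cup e$ is dependent, and elements of $I$ give $0$ by the first case above.

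\textbf{Conclusion.} Lemma \ref{lem:pushforward_weight} now yields a weight $w'$ on $\Gamma_E$ with $w'(\tilde\sigma_{e,\emptyset})=w'(\tilde\sigma_{\emptyset,e})=w(\overline{\sigma}_{e,\emptyset})$. This is exactly $\tilde w_{F\setminus\cl(I)}$, which proves the lemma.

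\textbf{Main obstacle.} The delicate step is the case analysis for $\overline{y}_e\overline{x}_{I,\mathscr{F}}$ in $A^*(W_{\mathbb{C}^E})$. Two things need care. First, $\overline{y}_e^2=0$ must hold in the stellahedral ring; this follows because $\overline{y}_e=\rho^*\tilde y_e$ and $\tilde y_e^2=0$. Second, the monomial must be identified with the dual of the orbit closure, which requires $(I\cup e,\mathscr{F})$ to be a genuine compatible pair. When $\mathscr{F}=\emptyset$ we take $F=E$, and there is no $\overline{x}$ factor to worry about, so only the $\overline{y}_e^2=0$ argument is needed.
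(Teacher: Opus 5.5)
Your proposal is correct and follows the paper's proof essentially step for step. Both arguments use the projection formula with $x_{I,\mathscr{F}}=\kappa^*\overline{x}_{I,\mathscr{F}}$, then Lemma \ref{lem:pushforward_wonderful_model}, then Lemma \ref{lem:pushforward_weight}, and then the same case analysis on $e$ (in $I$, in $\cl(I)\setminus I$, in $F\setminus\cl(I)$, outside $F$). There are two minor differences. First, you justify $\overline{y}_e^2=\rho^*(\tilde{y}_e^2)=0$ directly, whereas the paper cites \cite{semismall}. Second, your remark about the case $\mathscr{F}=\emptyset$ under ``$e\notin F$'' is vacuous, since then $F=E$.
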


\begin{proof}
We show that the left-hand side equals the right-hand side. By the commutativity of Diagram \ref{dia:commutative_square}, we have
\[(\iota \circ \pi)_*(x_{I,\mathscr{F}} \cap [W_V]) = (\rho \circ \kappa)_*(x_{I,\mathscr{F}} \cap [W_V]).\]
Since $\kappa^*(\overline{x}_{I,\mathscr{F}}) = x_{I,\mathscr{F}},$ we have
\[(\rho \circ \kappa)_*(x_{I,\mathscr{F}} \cap [W_V]) = (\rho \circ \kappa)_*(\kappa^*(\overline{x}_{I,\mathscr{F}}) \cap [W_V]),\]
and by the projection formula, we have
\[(\rho \circ \kappa)_*(\kappa^*(\overline{x}_{I,\mathscr{F}}) \cap [W_V]) = \rho_*(\overline{x}_{I, \mathscr{F}} \cap \kappa_*[W_V]).\]
By Lemma \ref{lem:pushforward_wonderful_model}, we have
\[\rho_*(\overline{x}_{I, \mathscr{F}} \cap \kappa_*[W_V]) = \rho_*(\overline{x}_{I, \mathscr{F}} \cap (\overline{w}_{M_V} \cap [W_{\mathbb{C}^E}])),\]
and by the $A^*(W_{\mathbb{C}^E})$-module structure of $A_*(W_{\mathbb{C}^E}),$ we have
\[\rho_*(\overline{x}_{I, \mathscr{F}} \cap (\overline{w}_{M_V} \cap [W_{\mathbb{C}^E}])) = \rho_*((\overline{x}_{I, \mathscr{F}} \cup \overline{w}_{M_V}) \cap [W_{\mathbb{C}^E}]).\]
By Lemma \ref{lem:pushforward_weight},
\[\rho_*((\overline{x}_{I, \mathscr{F}} \cup \overline{w}_{M_V}) \cap [W_{\mathbb{C}^E}]) = w' \cap [Y_{\mathbb{C}^E}],\]
where $w'$ is the 1-dimensional Minkowski weight on $\Gamma_E$ defined by the rules
\[\tilde{\sigma}_{e, \emptyset} \mapsto (\overline{x}_{I, \mathscr{F}} \cup \overline{w}_{M_V})(\overline{\sigma}_{e,\emptyset}) \quad \text{and} \quad \tilde{\sigma}_{\emptyset, e} \mapsto (\overline{x}_{I, \mathscr{F}} \cup \overline{w}_{M_V})(\overline{\sigma}_{e,\emptyset}),\]
if we think about the cohomology class $(\overline{x}_{I, \mathscr{F}} \cup \overline{w}_{M_V}) \in A^{n-1}(W_{\mathbb{C}^E})$ as a 1-dimensional Minkowski weight on $\Delta_E.$
So, we compute $(\overline{x}_{I, \mathscr{F}} \cup \overline{w}_{M_V})(\overline{\sigma}_{e,\emptyset}).$ By Theorem \ref{thm:fulton_sturmfels_iso}, we have
\[(\overline{x}_{I, \mathscr{F}} \cup \overline{w}_{M_V})(\overline{\sigma}_{e,\emptyset}) = \deg_{W_{\mathbb{C}^E}}((\overline{x}_{I, \mathscr{F}} \cup \overline{w}_{M_V}) \cap [V(\overline{\sigma}_{e,\emptyset})]).\]
The Poincar\'e dual of the homology class $[V(\overline{\sigma}_{e,\emptyset})]$ is the cohomology class $\overline{y}_e$, so we have,
\[\deg_{W_{\mathbb{C}^E}}((\overline{x}_{I, \mathscr{F}} \cup \overline{w}_{M_V}) \cap [V(\overline{\sigma}_{e,\emptyset})]) = \deg_{W_{\mathbb{C}^E}}((\overline{x}_{I, \mathscr{F}} \cup \overline{w}_{M_V}) \cap (\overline{y}_e \cap [W_{\mathbb{C}^E}])),\]
and by the $A^*(W_{\mathbb{C}^E})$-module structure of $A_*(W_{\mathbb{C}^E})$ and the commutativity of $A^*(W_{\mathbb{C}^E})$, we have
\begin{equation}
\label{eq:pushforward_formula_1}
\begin{split}
& \deg_{W_{\mathbb{C}^E}}((\overline{x}_{I, \mathscr{F}} \cup \overline{w}_{M_V}) \cap (\overline{y}_e \cap [W_{\mathbb{C}^E}])) =  \\ & \deg_{W_{\mathbb{C}^E}}((\overline{w}_{M_V} \cup \overline{x}_{I, \mathscr{F}} \cup \overline{y}_e)\cap [W_{\mathbb{C}^E}])).
\end{split}
\end{equation}
We do cases on the element $e$ to compute the value of the right-hand side of Equation (\ref{eq:pushforward_formula_1}).

\noindent
\textit{Case 1: $e \in I$.} By \cite[Lemma 2.11]{semismall}, we have
\[\overline{x}_{I, \mathscr{F}} \cup \overline{y}_e =  0,\]
so
\[\deg_{W_{\mathbb{C}^E}}((\overline{w}_{M_V} \cup \overline{x}_{I, \mathscr{F}} \cup \overline{y}_e)\cap [W_{\mathbb{C}^E}])) = 0.\]

\noindent
\textit{Case 2: $e \in \cl(I) \setminus I$.} Let $I' = I \cup e.$ The set $I'$ is a dependent set of $M_V,$ so we have that $(I', \mathscr{F})$ is a compatible pair of the Boolean matroid, but not a compatible pair of $M_V.$
Thus,
\begin{equation*}
\deg_{W_{\mathbb{C}^E}}((\overline{w}_{M_V} \cup \overline{x}_{I, \mathscr{F}} \cup \overline{y}_e)\cap [W_{\mathbb{C}^E}])) = \deg_{W_{\mathbb{C}^E}}((\overline{w}_{M_V}
\cup \overline{x}_{I', \mathscr{F}})\cap [W_{\mathbb{C}^E}])),
\end{equation*}
and by the $A^*(W_{\mathbb{C}^E})$-module structure of $A_*(W_{\mathbb{C}^E})$ and Poincar\'e duality, we have
\begin{equation*}
\begin{split}
\deg_{W_{\mathbb{C}^E}}((\overline{w}_{M_V}
\cup \overline{x}_{I', \mathscr{F}})\cap [W_{\mathbb{C}^E}])) & = \deg_{W_{\mathbb{C}^E}}(\overline{w}_{M_V} \cap (\overline{x}_{I', \mathscr{F}}\cap [W_{\mathbb{C}^E}])) \\ & = \deg_{W_{\mathbb{C}^E}}(\overline{w}_{M_V} \cap [V(\sigma_{I', \mathscr{F}})]).
\end{split}
\end{equation*}
By Theorem \ref{thm:fulton_sturmfels_iso}, we have
\[\deg_{W_{\mathbb{C}^E}}(\overline{w}_{M_V} \cap [V(\sigma_{I', \mathscr{F}})]) = \overline{w}_{M_V}(\sigma_{I', \mathscr{F}}) = 0.\]

\noindent
\textit{Case 3: $e \in F \setminus \cl(I)$.}
Let $I' = I \cup e.$ Since $I'$ is an independent set of $M_V,$ we have that $(I', \mathscr{F})$ is both a compatible pair of the Boolean matroid and a compatible pair of $M_V.$ By an identical argument to the one for Case 2, we have
\[\deg_{W_{\mathbb{C}^E}}((\overline{w}_{M_V} \cup \overline{x}_{I, \mathscr{F}} \cup \overline{y}_e)\cap [W_{\mathbb{C}^E}])) = \overline{w}_{M_V}(\sigma_{I', \mathscr{F}}).\]
But this time,
\[\overline{w}_{M_V}(\sigma_{I', \mathscr{F}}) = 1.\]

\noindent
\textit{Case 4: $e \in E \setminus F$.} By the relations (\textoverline{R3}), we have
\[\overline{x}_{I, \mathscr{F}} \cup \overline{y}_e =  0,\]
so
\[\deg_{W_{\mathbb{C}^E}}((\overline{w}_{M_V} \cup \overline{x}_{I, \mathscr{F}} \cup \overline{y}_e)\cap [W_{\mathbb{C}^E}])) = 0.\]

To recap,
\begin{equation*}
(\overline{x}_{I, \mathscr{F}} \cup \overline{w}_{M_V})(\overline{\sigma}_{e,\emptyset}) =
\begin{cases}
    1 & e \in F \setminus \cl(I) \\
    0 & \text{otherwise}.
\end{cases}
\end{equation*}
Hence $w' = \tilde{w}_{F \setminus \cl(I)},$ and we are done.
\end{proof}

\begin{lemma}
\label{lem:pushforward_equality}
Let $(I, \mathscr{F})$ be a nearly maximal compatible pair of $M_V$. Let $F$ denote the smallest flat in $\mathscr{F}$. (If $\mathscr{F}$ is empty, then set $F := E.$) For each $G \in \mathcal{L}_1(F \setminus \cl(I))$, let
\[\mathscr{F}_G:F_{G,1} \subsetneq F_{G,2} \subsetneq \cdots \subsetneq F_{G,r-1}\]
be a flag of proper flats of $M_V$ such that $F_{G,1}$ is the flat $G$, $F_{G,2}$ is a flat of rank $2,\ldots,$ and $F_{G,r-1}$ is a flat of rank $r-1$. Then, we have the following equality in the Chow group $A_1(Y_V)$:

\[\pi_*(x_{I,\mathscr{F}} \cap [W_V])=\sum_{G \in \mathcal{L}_1(F \setminus \cl(I))}\pi_*\left(x_{\emptyset, \mathscr{F}_G} \cap [W_V] \right).\]
\end{lemma}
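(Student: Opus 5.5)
The strategy is to exploit the injectivity of $\iota_*$ (Corollary \ref{cor:pushforward_injective}). It therefore suffices to show that both sides of the claimed equality have the same image under $\iota_*$ in $A_1(Y_{\mathbb{C}^E})$. Since $\iota_*\pi_* = (\iota\circ\pi)_*$, this reduces the problem to computing $(\iota \circ \pi)_*$ of each class. Lemma \ref{lem:pushforward_formula} does exactly this for nearly maximal compatible pairs.

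\textbf{Left-hand side.} Since $(I,\mathscr{F})$ is nearly maximal, Lemma \ref{lem:pushforward_formula} gives
\[(\iota\circ\pi)_*(x_{I,\mathscr{F}}\cap[W_V]) = \tilde{w}_{F\setminus \cl(I)}\cap [Y_{\mathbb{C}^E}].\]

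\textbf{Each summand on the right.} Fix $G \in \mathcal{L}_1(F\setminus\cl(I))$. First check that $(\emptyset,\mathscr{F}_G)$ is a nearly maximal compatible pair:
\begin{itemize}
\item the empty set lies in every flat, so the pair is compatible;
\item $\rk(\emptyset)+|\mathscr{F}_G| = 0+(r-1) = r-1$;
\item $\cl(\emptyset)=\emptyset$ (as $M_V$ is loopless) is not in $\mathscr{F}_G$, since all flats of $\mathscr{F}_G$ have rank at least $1$.
\end{itemize}
The smallest flat of $\mathscr{F}_G$ is $G$, so Lemma \ref{lem:pushforward_formula} gives
\[(\iota\circ\pi)_*(x_{\emptyset,\mathscr{F}_G}\cap[W_V]) = \tilde{w}_{G}\cap[Y_{\mathbb{C}^E}].\]

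\textbf{Combining.} By its definition, $\tilde{w}_S$ is additive over disjoint unions of subsets $S$: its value on a ray cone is the indicator that the relevant element lies in $S$. By Lemma \ref{lem:flat_disjoint_union} (together with the definition of $\mathcal{L}_1(F_2\setminus F_1)$), $F\setminus\cl(I)$ is the disjoint union of the rank-1 flats $G\in\mathcal{L}_1(F\setminus\cl(I))$. Hence
\[\tilde{w}_{F\setminus\cl(I)}=\sum_G \tilde{w}_G.\]
Capping with $[Y_{\mathbb{C}^E}]$ and using linearity of $\pi_*$ and $\iota_*$, both sides have the same image under $\iota_*$, and injectivity of $\iota_*$ finishes the proof.

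\textbf{Main obstacle.} The only delicate point is justifying that $F\setminus\cl(I)$ really is partitioned by rank-1 flats. When $\mathscr{F}$ is empty, $F=E$, so we need $E\setminus\cl(I)$ to be a union of rank-1 flats. This holds because each rank-1 flat $\cl(e)$ is either contained in the flat $\cl(I)$ or disjoint from it: if $\cl(e)$ meets $\cl(I)$ in some $e'$, then $\cl(e)=\cl(e')\subseteq\cl(I)$, using that rank-1 flats are disjoint or equal (Lemma \ref{lem:flat_disjoint_union}). The same argument applies to $F$ and $\cl(I)$ in general. I would include this remark briefly before invoking additivity of $\tilde{w}$.
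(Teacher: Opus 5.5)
Your proposal is correct and follows essentially the same route as the paper: you reduce to $A_1(Y_{\mathbb{C}^E})$ via the injectivity of $\iota_*$ (Corollary \ref{cor:pushforward_injective}), evaluate both sides with Lemma \ref{lem:pushforward_formula}, and conclude from the identity $\tilde{w}_{F\setminus\cl(I)}=\sum_{G}\tilde{w}_G$. Your explicit checks that each $(\emptyset,\mathscr{F}_G)$ is nearly maximal and that $F\setminus\cl(I)$ is partitioned by rank-1 flats fill in details that the paper leaves implicit or calls straightforward.
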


\begin{proof}
By the injectivity of $\iota_*$ by Corollary \ref{cor:pushforward_injective}, it suffices to show that we have the equality
\begin{equation}
\label{eq:pushforward_equality_1}
{(\iota \circ \pi)}_*(x_{I, \mathscr{F}} \cap [W_V])=\sum_{G \in \mathcal{L}_1(F \setminus \cl(I))}{(\iota \circ \pi)}_*\left( x_{\emptyset, \mathscr{F}_G} \cap [W_V]\right).
\end{equation}
By Lemma \ref{lem:pushforward_formula}, the left-hand side of Equation (\ref{eq:pushforward_equality_1}) is
\begin{equation*}
\label{eq:pushforward_equality_2}
\tilde{w}_{F \setminus \cl(I)} \cap [Y_{\mathbb{C}^E}],
\end{equation*}
and the right-hand side of Equation (\ref{eq:pushforward_equality_1}) is
\begin{equation*}
\label{eq:pushforward_equality_3}
\sum_{G \in \mathcal{L}_1(F \setminus \cl(I))}\big(\tilde{w}_G \cap [Y_{\mathbb{C}^E}]\big).
\end{equation*}
It is straightforward to check that
\[\tilde{w}_{F \setminus \cl(I)} = \sum_{G \in \mathcal{L}_1(F \setminus \cl(I))}\tilde{w}_G,\]
so we are done.
\end{proof}

\begin{lemma}
\label{lem:equality_of_degrees}
    Let $L$ be a line bundle on $Y_V$ and consider the first Chern class $c_1(\pi^*L) \in A^1(W_V)$. Let $\alpha , \beta \in A^{r-1}(W_V)$ such that $\pi_*(\alpha \cap [W_V]) = \pi_*(\beta \cap [W_V]).$  Then,
    \[\deg_{W_V}((c_1(\pi^*L) \cup \alpha) \cap[W_V]) = \deg_{W_V}((c_1(\pi^*L) \cup \beta) \cap [W_V]).\]
\end{lemma}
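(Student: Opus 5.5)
The plan is to move everything to $Y_V$ with the projection formula for the proper morphism $\pi: W_V \to Y_V$, and to use that $\deg_{W_V} = \deg_{Y_V} \circ \pi_*$ on $A_0$. The key point is that $c_1(\pi^*L) = \pi^* c_1(L)$ in operational Chow cohomology, because pullback commutes with Chern classes of line bundles. Here $c_1(L) \in A^1(Y_V)$ is an operational class on the singular variety $Y_V$; this is exactly why operational Chow cohomology is used rather than divisors.

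First step: rewrite the left-hand side using associativity of cap products and the $A^*(W_V)$-module structure of $A_*(W_V)$:
\[
(c_1(\pi^*L) \cup \alpha) \cap [W_V] = c_1(\pi^*L) \cap (\alpha \cap [W_V]).
\]
Commutativity of $A^*(W_V)$ is not even needed here, since the cup-then-cap convention just composes operations. Substituting $c_1(\pi^*L) = \pi^*c_1(L)$ and applying the projection formula for operational classes (\cite{operational_chow_cohomology}) gives
\[
\pi_*\big(\pi^* c_1(L) \cap (\alpha \cap [W_V])\big) = c_1(L) \cap \pi_*(\alpha \cap [W_V]).
\]

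Second step: take degrees. Since $W_V$ and $Y_V$ are proper and $\pi$ is proper, we have $\deg_{W_V} = \deg_{Y_V} \circ \pi_*$ on zero-cycles. Hence
\[
\deg_{W_V}\big((c_1(\pi^*L) \cup \alpha) \cap [W_V]\big) = \deg_{Y_V}\big(c_1(L) \cap \pi_*(\alpha \cap [W_V])\big).
\]
The same computation applies to $\beta$. The hypothesis $\pi_*(\alpha \cap [W_V]) = \pi_*(\beta \cap [W_V])$ in $A_1(Y_V)$ then gives equality of the two right-hand sides, which proves the lemma.

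I expect no real obstacle. The only point needing care is justifying $c_1(\pi^*L) = \pi^*c_1(L)$ together with the projection formula $\pi_*(\pi^*c \cap z) = c \cap \pi_* z$ for operational classes on a possibly singular target. Both are standard properties of Chern class operations of line bundles in \cite{operational_chow_cohomology} (Fulton's Intersection Theory, Prop. 2.5 and the axioms of bivariant classes), and I would cite them directly.
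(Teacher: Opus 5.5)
Your proof is correct and is essentially the paper's argument read in the opposite direction. The paper caps the hypothesis with $c_1(L)$, applies the projection formula together with $\pi^*c_1(L) = c_1(\pi^*L)$, rewrites the result using the module structure, and then takes degrees via $\deg_{W_V} = \deg_{Y_V}\circ\pi_*$; you use the same ingredients, and your appeal to Fulton's Proposition 2.5 for the projection formula is the right justification.
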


\begin{proof}   
    By the hypothesis, we have
    \[c_1(L) \cap \pi_*(\alpha \cap [W_V]) = c_1(L) \cap \pi_*(\beta \cap [W_V]).\]
    The projection formula gives us
    \[\pi_*(\pi^*(c_1(L)) \cap (\alpha \cap [W_V])) = \pi_*(\pi^*(c_1(L)) \cap (\beta\cap [W_V])),\]
    and since the pullback commutes with taking the Chern class, we get
    \[\pi_*(c_1(\pi^*L) \cap (\alpha \cap [W_V])) = \pi_*(c_1(\pi^*L) \cap (\beta\cap [W_V])).\]
    By the $A^*(W_{V})$-module structure of $A_*(W_{V}),$ we have
    \[\pi_*((c_1(\pi^*L) \cup  \alpha) \cap [W_V]) = \pi_*((c_1(\pi^*L) \cup \beta) \cap [W_V]).\]
    By taking the $\deg_{Y_V}$ of both sides, and noting that $\deg_{W_V} = \deg_{Y_V} \circ \; \pi_*,$ we see that
    \[\deg_{W_V}((c_1(\pi^*L) \cup  \alpha) \cap [W_V]) = \deg_{W_V}((c_1(\pi^*L) \cup \beta) \cap [W_V]). \qedhere\] 
\end{proof}

\begin{lemma}
\label{lem:conincidence_of_line_bundles}
    Let $L_1$ be a line bundle on $Y_V$. There exists a line bundle $L_2$ on $Y_{\mathbb{C}^E}$ such that $(\iota \circ \pi)^*L_2 \cong {\pi}^*L_1$.
\end{lemma}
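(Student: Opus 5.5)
Since $W_V$ is smooth and proper with $A^*(W_V)$ generated in degree $1$ by the $x_F$ and $y_e$, the plan is to write $c_1(\pi^*L_1)=\sum_{F\in\mathcal{L}_{\neq r}(M_V)}a(F)x_F$ for some integer function $a$. We may do this because relation (R1) removes every $y_e$. We then show that $a$ satisfies the hypotheses of Lemma \ref{lem:alternate_equations}. Lemma \ref{lem:alternate_equations} then gives the hypothesis of Lemma \ref{lem:line_bundle_construction}, which produces $b:E\to\mathbb{Z}$ with $(\iota\circ\pi)^*\sum_e b(e)\tilde y_e = c_1(\pi^*L_1)$. We set $L_2=\bigotimes_e \mathcal{O}(b(e)\cdot D_e)$, where $D_e$ is the divisor with class $\tilde y_e$. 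Then $c_1((\iota\circ\pi)^*L_2)=c_1(\pi^*L_1)$. On the smooth variety $W_V$, $\Pic(W_V)\cong A^1(W_V)$ via $c_1$ (divisor classes equal Chow classes in codimension one), so $(\iota\circ\pi)^*L_2\cong\pi^*L_1$. Two technical points need attention. The value $a(\emptyset)$ enters only as a bookkeeping term, since $x_\emptyset$ is a genuine generator (the empty flat is proper). The representation by $a$ is not unique, so we fix one choice.

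The key inputs are Lemmas \ref{lem:pushforward_equality} and \ref{lem:equality_of_degrees}, combined with the degree computation Lemma \ref{lem:chow_ring_computations}(III). For every nearly maximal compatible pair $(I,\mathscr{F})$, Lemma \ref{lem:pushforward_equality} shows that $\alpha=x_{I,\mathscr{F}}$ and $\beta=\sum_G x_{\emptyset,\mathscr{F}_G}$ have equal pushforwards. Lemma \ref{lem:equality_of_degrees} then gives
\[a(\cl(I))-a(F_{s+1}) \;=\; \sum_{G\in\mathcal{L}_1(F\setminus\cl(I))}\big(a(\emptyset)-a(G)\big),\]
where each term on the right comes from (III) applied to $(\emptyset,\mathscr{F}_G)$. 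That pair is nearly maximal, since it has $r-1$ flats and $\cl(\emptyset)=\emptyset\notin\mathscr{F}_G$, and its smallest flat is $G$.

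To get the two families of equations in Lemma \ref{lem:alternate_equations}, we choose the pairs suitably. Given $F_1\subsetneq F_2$ with $\rk F_2=\rk F_1+1$ and $F_2$ proper, take $I$ a basis of $F_1$ and $\mathscr{F}$ a maximal chain of proper flats starting at $F_2$ (ranks $\rk F_2,\dots,r-1$). Then $\rk(I)+|\mathscr{F}|=\rk F_1+(r-\rk F_2)=r-1$ and $\cl(I)=F_1\notin\mathscr{F}$, so the pair is nearly maximal, and the identity becomes the first hypothesis. For a corank-$1$ flat $F_3$, take $I$ a basis of $F_3$ and $\mathscr{F}=\emptyset$. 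Then $F=E$, $a(F_{s+1}):=0$, and we get $a(F_3)=\sum_{G\in\mathcal{L}_1(E\setminus F_3)}(a(\emptyset)-a(G))$. The flats of rank $0$ and rank $1$ are covered by taking $F_1=\emptyset$ in the first family, which gives a tautology, while higher flats follow from the induction in Lemma \ref{lem:alternate_equations}. The rank-$0$ flat $\emptyset$ itself is excluded by Lemma \ref{lem:alternate_equations}. For $F=\emptyset$ the sum in Lemma \ref{lem:line_bundle_construction} is $\sum_{G}(a(\emptyset)-a(G))$ over all rank-$1$ flats, so we also need this to equal $a(\emptyset)$.

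The main obstacle is this $F=\emptyset$ case. The plan is to use the freedom in $a$. The class $\sum_e y_e$-type relations, or equivalently adding the same constant to the coefficients along a relation, let us shift $a$. Concretely, there is a linear relation among the $x_F$ coming from subtracting the (R1) relations for two elements $e,e'$, and that can adjust $a(\emptyset)$. Alternatively, we check directly that the conclusion of Lemma \ref{lem:line_bundle_construction} only requires the identity on flats $F$ whose $x_F$ actually appear with a nonzero coefficient discrepancy. The first thing to try is to apply the $F_1=\emptyset$, $F_2=G$ identity together with the corank-one one along a full chain. Telescoping then yields $a(\emptyset)-0=\sum_{G\in\mathcal{L}_1(E)}(a(\emptyset)-a(G))$ via the pair $(\emptyset,\text{full flag})$ being maximal rather than nearly maximal, so the identity has to come from the chain sum itself. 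This is the step whose bookkeeping I expect to be delicate.
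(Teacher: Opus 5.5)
Your argument is the paper's proof. It uses the same reduction via $\Pic\cong A^1$ on the smooth varieties $W_V$ and $Y_{\mathbb{C}^E}$, and the same nearly maximal pairs: a basis of $F_1$ with a chain starting at $F_2$, and a basis of $F_3$ with the empty flag. It also runs through the same Lemmas \ref{lem:pushforward_equality}, \ref{lem:equality_of_degrees}, \ref{lem:chow_ring_computations}(III), \ref{lem:alternate_equations} and \ref{lem:line_bundle_construction}.

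The one real problem is your last paragraph. There you leave the case $F=\emptyset$ open and propose to handle it through ``the freedom in $a$.'' That route fails, because there is no freedom:
\begin{itemize}
\item The only degree-one relations in $I_M$ are (R1), and they merely eliminate the $y_e$. So the $x_F$ form a $\mathbb{Z}$-basis of $A^1(W_V)$, and $a$ is uniquely determined. Your remark that the representation is not unique is false.
\item A difference of two (R1) relations still involves $y_e-y_{e'}$, so it gives no relation among the $x_F$. It also does not touch $x_\emptyset$ at all, since $x_\emptyset$ occurs in every (R1) relation with coefficient $1$.
\end{itemize}

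None of this is needed, because the $F=\emptyset$ identity follows in one line from the rank-one case.
\begin{itemize}
\item Assume $r\ge 2$, so that rank-one flats are proper, and pick any rank-one flat $G_0$. Lemma \ref{lem:alternate_equations} gives $a(G_0)=\sum_{G\in\mathcal{L}_1(E\setminus G_0)}(a(\emptyset)-a(G))$.
\item Add $a(\emptyset)-a(G_0)$ to both sides and use $\mathcal{L}_1(E)=\{G_0\}\sqcup\mathcal{L}_1(E\setminus G_0)$. This gives $a(\emptyset)=\sum_{G\in\mathcal{L}_1(E)}(a(\emptyset)-a(G))$, which is what Lemma \ref{lem:line_bundle_construction} needs at $F=\emptyset$.
\end{itemize}

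Equivalently, the inductive step in the proof of Lemma \ref{lem:alternate_equations} runs unchanged at corank $r$. It uses the tautological instance $F_1=\emptyset$, $F_2=G_0$ of that lemma's first hypothesis. So the lemma's restriction to flats of rank at least $1$ is unnecessary, and the paper uses this silently when it ``combines'' the two lemmas. You were right to notice the mismatch between the two lemmas' ranges.

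This is also exactly your telescoping along a full chain $\emptyset\subsetneq G_0\subsetneq\cdots\subsetneq F_{r-1}$. The pieces $F_{i+1}\setminus F_i$ and $E\setminus F_{r-1}$ partition $E$ into rank-one flats, so the right-hand sides add up to the sum over all of $\mathcal{L}_1(E)$. No maximal pair enters and nothing is delicate. Close the gap this way and drop the other suggestions.
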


\begin{proof}
If $X$ and $Y$ are smooth varieties and $f: X \to Y$ is a scheme morphism, then taking the first Chern class of line bundles gives us isomophisms $\Pic(X) \cong A^1(X)$ and $\Pic(Y) \cong A^1(Y)$, and these isomorphisms are compatible with pulling back. Both $W_V$ and $Y_{\mathbb{C}^E}$ are smooth varieties, and $\iota \circ \pi$ is a morphism from $W_V$ to $Y_{\mathbb{C}^E}$, so it suffices to show that there exists an element $\beta \in A^1(Y_{\mathbb{C}^E})$ such that $(\iota \circ\pi)^* \beta = c_1(\pi^*L_1).$ 

Using the set of the relations (R1) in $A^*(W_V),$ we write $c_1(\pi^*L_1)\in A^1(W_V)$ as a $\mathbb{Z}$-linear combination of the symbols $x_F$, and let $a: \mathcal{L}_{\neq r}(M_V) \to \mathbb{Z}$ be the function that takes the proper flat $F$ to the coefficient of $x_F$ in $c_1(\pi^*L_1).$ Then, by combining Lemma \ref{lem:line_bundle_construction} and Lemma \ref{lem:alternate_equations}, it suffices to show that, for all pairs of proper flats $F_1, F_2$ such that $F_1 \subsetneq F_2,$ and
$\rk(F_2) =\rk(F_1) + 1,$ we have
\begin{equation}
\label{eq:coincidence_of_line_bundles_1}
a(F_1) - a(F_2) = \sum_{G \in \mathcal{L}_1(F_2 \setminus F_1)}(a(\emptyset) - a(G)),
\end{equation}
and for all corank-1 flats $F_3,$ we have
\begin{equation}
\label{eq:coincidence_of_line_bundles_2}
a(F_3)=\sum_{G \in \mathcal{L}_1(E \setminus F_3)}(a(\emptyset) - a(G)).
\end{equation}

Let such a pair $F_1,F_2$ be given.
Let $s = \rk(F_1)$ and $t = (r-1) - s$. Choose $I$ to be a basis of $M | F_1,$ and choose an flag
\[\mathscr{F}:F_{s+1} \subsetneq F_{s+2} \subsetneq \cdots \subsetneq F_{s+t}\]
of proper flats of $M_V$ such that $F_{s+1}$ is the flat $F_2,$ $F_{s+2}$ is a flat of rank $s+2,\ldots,$ and $F_{s+t}$ is a flat of rank $s+t$. We note that $(I, \mathscr{F})$ is a nearly maximal pair of $M_V.$ For each $G \in \mathcal{L}_1(F_2 \setminus \cl(I))$, choose a flag
\[\mathscr{F}_G:F_{G,1} \subsetneq F_{G,2} \subsetneq \cdots \subsetneq F_{G,r-1}\]
of proper flats of $M_V$ such that $F_{G,1}$ is the flat $G$, $F_{G,2}$ is a flat of rank $2,\ldots,$ and $F_{G,r-1}$ is a flat of rank $r-1$. 
By Lemma \ref{lem:pushforward_equality}, we have
\[\pi_*( x_{I, \mathscr{F}} \cap [W_V]) = \sum_{G \in \mathcal{L}_1(F_2 \setminus F_1)}\pi_*( x_{\emptyset, \mathscr{F}_G} \cap[W_V]),\]
and by Lemma \ref{lem:equality_of_degrees}, we have
\begin{equation}
\label{eq:coincidence_of_line_bundles_3}
\begin{split}
& \deg_{W_V}((c_1(\pi^*L) \cup x_{I, \mathscr{F}}) \cap [W_V]) = \\ \sum_{G \in \mathcal{L}_1(F_2 \setminus F_1)} &\deg_{W_V}((c_1(\pi^*L) \cup x_{\emptyset, \mathscr{F}_G}) \cap[W_V]).
\end{split}
\end{equation}
By applying Lemma \ref{lem:chow_ring_computations}, Equation (\ref{eq:coincidence_of_line_bundles_3}) becomes Equation (\ref{eq:coincidence_of_line_bundles_1}). 

Let a corank-1 flat $F_3$ be given. Choose $I$ to be a basis of $M|F_3,$ and let $\mathscr{F}$ be the empty flag. For each $G \in \mathcal{L}_1(E \setminus F_3)$, choose a flag
\[\mathscr{F}_G:F_{G,1} \subsetneq F_{G,2} \subsetneq \cdots \subsetneq F_{G,r-1}\]
of proper flats of $M_V$ such that $F_{G,1}$ is the flat $G$, $F_{G,2}$ is a flat of rank $2,\ldots,$ and $F_{G,r-1}$ is a flat of rank $r-1$.
By Lemma \ref{lem:pushforward_equality}, we have
\[\pi_*( x_{I, \mathscr{F}} \cap [W_V]) = \sum_{G \in \mathcal{L}_1(E \setminus F_3)}\pi_*( x_{\emptyset, \mathscr{F}_G} \cap[W_V]),\]
and by Lemma \ref{lem:equality_of_degrees}, we have
\begin{equation}
\label{eq:coincidence_of_line_bundles_4}
\begin{split}
& \deg_{W_V}((c_1(\pi^*L) \cup x_{I, \mathscr{F}}) \cap [W_V]) \\ = \sum_{G \in \mathcal{L}_1(E \setminus F_3)} & \deg_{W_V}((c_1(\pi^*L) \cup x_{\emptyset, \mathscr{F}_G}) \cap[W_V]).
\end{split}
\end{equation}
By applying Lemma \ref{lem:chow_ring_computations}, Equation (\ref{eq:coincidence_of_line_bundles_4}) becomes Equation (\ref{eq:coincidence_of_line_bundles_2}).
\end{proof}

\begin{lemma}
\label{lem:pullback_injective}

The pullback map $\pi^*: \Pic(Y_V) \to \Pic(W_V)$ is injective.
\end{lemma}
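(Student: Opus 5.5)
The natural route is the standard fact that pulling back along a proper birational morphism with connected fibres to a normal variety is injective on Picard groups. The key input is $\pi_*\mathcal{O}_{W_V} = \mathcal{O}_{Y_V}$. Given that, let $L$ be a line bundle on $Y_V$ with $\pi^*L \cong \mathcal{O}_{W_V}$. The projection formula gives
\[L \cong L \otimes \pi_*\mathcal{O}_{W_V} \cong \pi_*(\pi^*L) \cong \pi_*\mathcal{O}_{W_V} \cong \mathcal{O}_{Y_V},\]
so the kernel of $\pi^*$ is trivial. The proof therefore reduces to establishing $\pi_*\mathcal{O}_{W_V}=\mathcal{O}_{Y_V}$.

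For that equality I would argue in three steps.

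\begin{enumerate}
\item[(1)] $\pi$ is proper and birational. Properness holds because $\pi$ is a resolution of singularities, as stated in the exposition before Diagram (\ref{dia:commutative_square}). Birationality holds because $\pi$ restricts to the identity on the dense open subset $V$ of both varieties.
\item[(2)] $Y_V$ is normal. Here I would cite the literature: matroid Schubert varieties are known to be normal, and in fact Cohen--Macaulay with rational singularities (cf.\ \cite{singular_hodge_theory}, or \cite{proudfoot_2018}, or the Gr\"obner and Cohen--Macaulay results of \cite{ardila_boocher}).
\item[(3)] Zariski's main theorem. Apply it to the finite part of the Stein factorization of $\pi$. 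Since $\pi_*\mathcal{O}_{W_V}$ is a coherent $\mathcal{O}_{Y_V}$-algebra contained in the function field and integral over a normal ring, it equals $\mathcal{O}_{Y_V}$.
\end{enumerate}

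The main obstacle is step (2), since nothing earlier in the paper establishes normality of $Y_V$ and it must be imported from the references. If that citation proves unavailable, I would fall back on a more hands-on argument tailored to the Chow-theoretic setup of this section. Pick a point $p$ in each stratum $Z_{V,F}$ from Proposition \ref{lem:stratification}. The fibre $\pi^{-1}(p)$ is a connected product of wonderful-type varieties. Trivializing $\pi^*L$ on $W_V$ gives a nowhere-vanishing section, which is constant on each connected proper fibre. It then descends to a section of $L$ that is continuous and regular on the dense open set $V$. Extending it across the boundary again requires normality, or at least seminormality, so in practice I expect to rely on the citation.
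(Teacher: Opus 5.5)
Your proposal is correct and is essentially the paper's proof: trivialize $\pi^*L$, push forward with the projection formula, and use $\pi_*\mathcal{O}_{W_V}=\mathcal{O}_{Y_V}$, which follows from normality of $Y_V$ and Zariski's Main Theorem. The one point to tighten is your step (2): the paper takes normality from \cite[Theorem 4.1]{crowley_proudfoot}, which is the reference to cite rather than the looser ``cf.'' list, and with that citation the fallback argument is unnecessary.
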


\begin{proof}

    Let $L$ be a line bundle on $Y_V$ such that ${\pi}^*L \cong \mathcal{O}_{W_V}.$ It will suffice to show that $L \cong \mathcal{O}_{Y_V}$. We start by writing the isomorphism
    \begin{equation*}
    \label{eq:injective1}
    \mathcal{O}_{W_V} \otimes \pi^*L \cong \mathcal{O}_{W_V}.
    \end{equation*}
    We take the direct image sheaf of both sides
    \begin{equation*}
    \label{eq:injective2}
    \pi_*(\mathcal{O}_{W_V} \otimes \pi^*L) \cong \pi_*(\mathcal{O}_{W_V})
    \end{equation*}
    and use the projection formula on the left-hand side to get
    \begin{equation}
    \label{eq:injective3}
    \pi_*(\mathcal{O}_{W_V}) \otimes L \cong \pi_*(\mathcal{O}_{W_V}).
    \end{equation}
    By \cite[Theorem 4.1]{crowley_proudfoot}, $Y_V$ is normal. Hence, by Zariski's Main Theorem, we have that $\pi_*\mathcal{O}_{W_V} = \mathcal{O}_{Y_V}.$ Thus, Isomorphism (\ref{eq:injective3}) becomes
    \begin{equation*}
    \label{eq:injective4}
    \mathcal{O}_{Y_V} \otimes L \cong \mathcal{O}_{Y_V}. \qedhere
    \end{equation*}      
    
\end{proof}

\begin{proof}[Proof of Proposition \ref{prop:line_bundles}]
Let $L_1$ be a line bundle on $Y_V$. Lemma \ref{lem:conincidence_of_line_bundles} tells us that there exists a line bundle $L_2$ on $Y_{\mathbb{C}^E}$ such that $(\iota \circ \pi)^*L_2 \cong \pi^*L_1$. Since $\pi^*$ is injective by Lemma \ref{lem:pullback_injective}, we get that $\iota^*L_2 \cong L_1,$ and we are done.
\end{proof}

\section{Answering the titular question}

The objective of this section is to prove Theorem \ref{thm:gorenstein_condition}. 

\begin{lemma}
\label{lem:multiplication_identity}
For all $e\in E$, we have the following equality in the Chow group $A_*(Y_{\mathbb{C}^E})$:
\begin{equation*}
\label{eq:multiplication_identity_1}
\tilde{y}_e \cap \iota_*[Y_V] = \sum_{F \in \mathcal{H}_e(M_V)} \iota_*[\overline{Z}_{V, F}].
\end{equation*}
\end{lemma}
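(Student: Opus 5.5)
Both sides will be computed explicitly in the basis of $A_*(Y_{\mathbb{C}^E})$ given by the classes $[\overline{Z}_{\mathbb{C}^E,S}]$, $S\subseteq E$, and then compared; Lemma \ref{lem:matroid_set_equality} supplies the matching.

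\textbf{Left-hand side.} Since $\iota_*[Y_V]=\iota_*[\overline{Z}_{V,E}]$, Lemma \ref{lem:pushforward_descr} gives
\[\iota_*[Y_V]=\sum_{B\in\mathcal{B}(M_V)}[\overline{Z}_{\mathbb{C}^E,B}].\]
Next we compute $\tilde{y}_e\cap[\overline{Z}_{\mathbb{C}^E,B}]$. The class $[\overline{Z}_{\mathbb{C}^E,B}]$ is $[V(\tilde{\sigma}_{S_1,S_2})]$ for any splitting $S_1\sqcup S_2=E\setminus B$, and by the description of the operational Chow ring of $Y_{\mathbb{C}^E}$ it is Poincar\'e dual to $\tilde{y}_{E\setminus B}$. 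Hence
\[\tilde{y}_e\cap[\overline{Z}_{\mathbb{C}^E,B}]=(\tilde{y}_e\tilde{y}_{E\setminus B})\cap[Y_{\mathbb{C}^E}].\]
If $e\notin B$, then $\tilde{y}_e$ already divides $\tilde{y}_{E\setminus B}$, so the product contains $\tilde{y}_e^2=0$. If $e\in B$, the product is $\tilde{y}_{E\setminus(B\setminus e)}$, which is dual to $[\overline{Z}_{\mathbb{C}^E,B\setminus e}]$. Therefore
\[\tilde{y}_e\cap\iota_*[Y_V]=\sum_{B'\in\mathcal{B}_e(M_V)}[\overline{Z}_{\mathbb{C}^E,B'}].\]

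\textbf{Right-hand side.} Apply Lemma \ref{lem:pushforward_descr} to each $F\in\mathcal{H}_e(M_V)$:
\[\iota_*[\overline{Z}_{V,F}]=\sum_{B\in\mathcal{B}(M_V|F)}[\overline{Z}_{\mathbb{C}^E,B}].\]
Summing over $F$ gives a sum of $[\overline{Z}_{\mathbb{C}^E,B}]$ over $\bigsqcup_{F\in\mathcal{H}_e}\mathcal{B}(M_V|F)$. By Lemma \ref{lem:matroid_set_equality}, this index set equals $\mathcal{B}_e(M_V)$, so the two sides agree.

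\textbf{Main obstacle.} The only delicate step is identifying $\tilde{y}_e\cap[\overline{Z}_{\mathbb{C}^E,B}]$ with $[\overline{Z}_{\mathbb{C}^E,B\setminus e}]$. This rests on the stated Poincar\'e duality and on the fact that $\tilde{y}_{S}$ is independent of the $\pm$ splitting. One could also verify it geometrically: $\tilde{y}_e$ is the class of the divisor $\{x_e=\infty\}$, which meets $\overline{Z}_{\mathbb{C}^E,B}$ transversally in $\overline{Z}_{\mathbb{C}^E,B\setminus e}$ when $e\in B$. When $e\notin B$, the vanishing can instead be seen by moving the divisor to $\{x_e=0\}$, which is disjoint from $\overline{Z}_{\mathbb{C}^E,B}$ since $x_e=\infty$ there.
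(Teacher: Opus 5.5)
Your proposal is correct and follows essentially the same route as the paper. You expand $\iota_*[Y_V]$ via Lemma~\ref{lem:pushforward_descr}, use the Poincar\'e duality between $[\overline{Z}_{\mathbb{C}^E,B}]$ and $\tilde{y}_{E\setminus B}$ together with $\tilde{y}_e^2=0$ to reduce to a sum over $\mathcal{B}_e(M_V)$, and then match with the right-hand side via Lemma~\ref{lem:matroid_set_equality} and a second application of Lemma~\ref{lem:pushforward_descr}; your geometric transversality remark is a valid optional check that the paper does not need.
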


\begin{proof}
    We will show that the left-hand side equals the right-hand side. By Lemma \ref{lem:pushforward_descr}, we have
    \begin{equation*}
    \label{eq:multiplication_identity_2}    
    \tilde{y}_e \cap \iota_*[Y_V]= \tilde{y}_e \cap \left( \sum_{B \in \mathcal{B}(M_V)} [\overline{Z}_{\mathbb{C}^E,B}] \right) = \sum_{B \in \mathcal{B}(M_V)} \big( \tilde{y}_e \cap [\overline{Z}_{\mathbb{C}^E,B}] \big).
    \end{equation*}
    The Poincar\'e dual of $[\overline{Z}_{\mathbb{C}^E,B}] \in A_{r}(Y_{\mathbb{C}^E})$ is $\tilde{y}_{E \setminus B} \in A^{n-r}(Y_{\mathbb{C}^E}),$ so we have
    \[\sum_{B \in \mathcal{B}(M_V)} \big( \tilde{y}_e \cap [\overline{Z}_{\mathbb{C}^E,B}] \big)= \sum_{B \in \mathcal{B}(M_V)} \big( \tilde{y}_e \cap (\tilde{y}_{E \setminus B} \cap [Y_{\mathbb{C}^E}])\big),\]
    and by the $A^*(Y_{\mathbb{C}^E})$-module structure of $A_*(Y_{\mathbb{C}^E}),$ we have
    \[\sum_{B \in \mathcal{B}(M_V)} \big( \tilde{y}_e \cap (\tilde{y}_{E \setminus B} \cap [Y_{\mathbb{C}^E}])\big) = \sum_{B \in \mathcal{B}(M_V)} \big( (\tilde{y}_e \cup \tilde{y}_{E \setminus B}) \cap[Y_{\mathbb{C}^E}]\big).\]
    Looking at our presentation of $A^*(Y_{\mathbb{C}^E}),$ we see that
    \begin{equation*}
    \tilde{y}_e \cup \tilde{y}_{E \setminus B}=
    \begin{cases}
    \tilde{y}_{E \setminus (B \setminus e)} & e \in B \\
    0 & e \notin B.
    \end{cases}        
    \end{equation*}
    Thus, we have
    \[\sum_{B \in \mathcal{B}(M_V)} \big( (\tilde{y}_e \cup \tilde{y}_{E \setminus B}) \cap[Y_{\mathbb{C}^E}]\big) = \sum_{B' \in \mathcal{B}_e(M_V)} \big( \tilde{y}_{E \setminus B'} \cap[Y_{\mathbb{C}^E}]\big),\]
    where $\mathcal{B}_e(M_V)$ denotes the set
    \[\{B \setminus e \, : \, B \in \mathcal{B}(M_V), \: e\in B\}.\]
    The cohomology class $\tilde{y}_{E \setminus B'}$ is Poincar\'e dual to the homology class $[\overline{Z}_{\mathbb{C}^E,B'}],$ so we have
    \[\sum_{B' \in \mathcal{B}_e(M_V)} \big( \tilde{y}_{E \setminus B'} \cap[Y_{\mathbb{C}^E}]\big) = \sum_{B' \in \mathcal{B}_e(M_V)} [\overline{Z}_{\mathbb{C}^E,B'}].\]    
    Then, by Lemma \ref{lem:matroid_set_equality}, we have
    \begin{equation*}
    \label{eq:multiplication_identity_5}
    \sum_{B' \in \mathcal{B}_e(M_V)} [\overline{Z}_{\mathbb{C}^E,B'}]=\sum_{F \in \mathcal{H}_e(M_V)}\sum_{B \in \mathcal{B}(M_V | F)} [\overline{Z}_{\mathbb{C}^E, B}].
    \end{equation*}
    Finally, by another application of Lemma \ref{lem:pushforward_descr}, we have    
    \[ \sum_{F \in \mathcal{H}_e(M_V)}\sum_{B \in \mathcal{B}(M_V | F)} [\overline{Z}_{\mathbb{C}^E, B}]=\sum_{F \in \mathcal{H}_e(M_V)} \iota_*[\overline{Z}_{V, F}]. \qedhere\]
\end{proof}

Now, we compute the Cartier divisor class group of $Y_V$. For a normal variety $X$, there is an injective map from $\CaCl(X)$ to $\Cl(X).$ In this paper, we identify $\CaCl(X)$ with its image in $\Cl(X).$

\begin{proposition}
\label{prop:cartier_class_group}
    The Cartier divisor class group $\CaCl(Y_V)$, viewed as a subgroup of $\Cl(Y_V)$, is generated by the set
    \[ \left\{ \sum_{F \in \mathcal{H}_e(M_V)} [\overline{Z}_{V,F} ] \right\}_{e\in E}.\]
\end{proposition}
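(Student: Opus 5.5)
We combine Proposition \ref{prop:line_bundles} with Lemma \ref{lem:multiplication_identity}. Since $Y_V$ is normal (by \cite[Theorem 4.1]{crowley_proudfoot}), $\CaCl(Y_V) \cong \Pic(Y_V)$, and the embedding $\CaCl(Y_V) \hookrightarrow \Cl(Y_V) = A_{r-1}(Y_V)$ sends a line bundle $L$ to $c_1(L) \cap [Y_V]$. By Proposition \ref{prop:line_bundles}, every line bundle on $Y_V$ has the form $\iota^* L_2$ with $L_2 \in \Pic(Y_{\mathbb{C}^E})$. The Picard group $\Pic(Y_{\mathbb{C}^E}) \cong A^1(Y_{\mathbb{C}^E})$ is free on the classes $\tilde{y}_e$. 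Hence $\CaCl(Y_V)$ is generated by the images of $\iota^*\tilde{y}_e$ for $e \in E$, i.e.\ by the classes $\iota^*\tilde{y}_e \cap [Y_V] \in A_{r-1}(Y_V)$. Conversely, every such class comes from a genuine line bundle, namely the restriction of $\mathcal{O}(1)$ pulled back from the $e$-th factor.

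It remains to identify $\iota^*\tilde{y}_e \cap [Y_V]$ with $\sum_{F \in \mathcal{H}_e(M_V)} [\overline{Z}_{V,F}]$. We push forward along $\iota$. By the projection formula,
\[\iota_*(\iota^*\tilde{y}_e \cap [Y_V]) = \tilde{y}_e \cap \iota_*[Y_V],\]
and by Lemma \ref{lem:multiplication_identity} the right-hand side equals $\iota_*\bigl(\sum_{F \in \mathcal{H}_e(M_V)} [\overline{Z}_{V,F}]\bigr)$. Since $\iota_*$ is injective by Corollary \ref{cor:pushforward_injective}, we conclude
\[\iota^*\tilde{y}_e \cap [Y_V] = \sum_{F \in \mathcal{H}_e(M_V)} [\overline{Z}_{V,F}]\]
in $A_{r-1}(Y_V)$. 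By Lemma \ref{lem:chow_group}, this group is free on the classes $[\overline{Z}_{V,F}]$ with $F$ a corank-1 flat, and it agrees with $\Cl(Y_V)$.

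The step requiring the most care is the compatibility of the identifications. We must check that the inclusion $\CaCl \hookrightarrow \Cl$ on a normal variety agrees with $L \mapsto c_1(L)\cap[Y_V]$ under $\Cl(Y_V)=A_{r-1}(Y_V)$. We must also check that $\iota^*$ on $\Pic$ corresponds to the operational pullback on $c_1$. Both follow from standard facts in Fulton's intersection theory, since Chern classes of line bundles commute with pullback and a Cartier divisor $D$ satisfies $c_1(\mathcal{O}(D))\cap[X]=[D]$. With these identifications in place, the argument above is a direct chain of equalities.
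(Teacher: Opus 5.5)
Your proposal is correct and follows essentially the same route as the paper. Both arguments use three ingredients: Proposition \ref{prop:line_bundles}, which gives surjectivity of $\Pic(Y_{\mathbb{C}^E})\to\Pic(Y_V)$; the identity $\iota_*(c_1(\iota^*L)\cap[Y_V])=\tilde{y}_e\cap\iota_*[Y_V]$, which the paper cites from Fulton, Chapter 2, and you obtain from the projection formula; and Lemma \ref{lem:multiplication_identity} combined with the injectivity of $\iota_*$. One small correction: $\CaCl(Y_V)\cong\Pic(Y_V)$ only needs $Y_V$ to be integral, while normality is what makes $\CaCl(Y_V)\to\Cl(Y_V)$ injective.
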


\begin{proof}
    Since $Y_{\mathbb{C}^E}$ is smooth, $\CaCl(Y_{\mathbb{C}^E}) = \Cl(Y_{\mathbb{C}^E}).$  Hence, by Lemma \ref{lem:chow_group}, we have that $\CaCl(Y_{\mathbb{C}^E})$ is freely generated by the set
    \[ \left\{ [\overline{Z}_{\mathbb{C}^E,E\setminus e} ] \right\}_{e\in E}.\]

    There is a surjective group homomorphism from $\CaCl(Y_{\mathbb{C}^E})$ to $\iota_*(\CaCl(Y_V))$ given by the following procedure: let $[D]$ be the class of a Cartier divisor $D$ on $Y_{\mathbb{C}^E}.$ Because $Y_{\mathbb{C}^E}$ is an integral scheme, the map from $\CaCl(Y_{\mathbb{C}^E})$ to $\Pic(Y_{\mathbb{C}^E})$ given by sending $[D]$ to the line bundle $L := \mathcal{O}_{Y_{\mathbb{C}^E}}(D)$ is an isomorphism. By Proposition \ref{prop:line_bundles}, we have a surjective group homomorphism from $\Pic(Y_{\mathbb{C}^E})$ to $\Pic(Y_V)$ by sending $L$ to $L|_{Y_V}.$ Then, we have an isomorphism from $\Pic(Y_V)$ to $\CaCl(Y_V)$; since $Y_V$ is an integral scheme, $L|_{Y_V}$ is isomorphic to $\mathcal{O}_{Y_V}(C)$ for some Cartier divisor $C$ on $Y_V,$ so we send $L|_{Y_V}$ to $[C].$ Finally, we send $[C]$ to $\iota_*[C].$ We apply this procedure to the generators of $\CaCl(Y_{\mathbb{C}^E})$ to get generators for $\iota_*(\CaCl(Y_V)).$
    
    Let $[\overline{Z}_{\mathbb{C}^E,E\setminus e}]$ be a generator of $\CaCl(Y_{\mathbb{C}^E})$. Say that $\alpha$ is the element in $\iota_*(\CaCl(Y_V)) \subseteq A_{r-1}(Y_{\mathbb{C}^E})$ that we get when we apply the procedure in the paragraph above to this generator. Then, \cite[Chapter 2]{intersection_theory} gives us a Chow cohomological way to compute $\alpha $:
    \begin{equation}
    \label{eq:cartier_class_group_1}
    c_1(L) \cap \iota_*[Y_V] = \alpha,
    \end{equation}
    where $L :=\mathcal{O}_{Y_{\mathbb{C}^E}}(\overline{Z}_{\mathbb{C}^E,E\setminus e}).$
    But since
    $c_1(L) \cap [Y_{\mathbb{C}^E}] = [\overline{Z}_{\mathbb{C}^E,E\setminus e}],$ we have that $c_1(L) \in A^1(Y_{\mathbb{C}^E})$ is the Poincar\'e dual of $[\overline{Z}_{\mathbb{C}^E,E\setminus e}],$ i.e., $\tilde{y}_e.$ So Equation (\ref{eq:cartier_class_group_1}) becomes
    \[\tilde{y}_e \cap \iota_*[Y_V] = \alpha.\]
    In conclusion, $\iota_*(\CaCl(Y_V))$ is generated by
    \[ \left\{ \tilde{y}_e \cap \iota_*[Y_V]\right\}_{e\in E}.\]    
    By Lemma \ref{lem:multiplication_identity}, we have
    \[\tilde{y}_e \cap \iota_*[Y_V] = \sum_{F \in \mathcal{H}_e(M_V)} \iota_*[\overline{Z}_{V,F} ], \] 
    so we have that $\iota_*(\CaCl(Y_V))$ is generated by the set
    \[ \left\{ \sum_{F \in \mathcal{H}_e(M_V)} \iota_*[\overline{Z}_{V,F} ] \right\}_{e\in E}.\]
    Since $\iota_*$ is injective by Corollary \ref{cor:pushforward_injective},
    we are done.    
\end{proof}

\begin{proposition}
\label{prop:canonical_divisor}
\cite[Proposition 4.4]{crowley_proudfoot}
The Weil divisor
\[\sum_{F \in \mathcal{H}(M_V)}-2\,\overline{Z}_{V,F}\]
is a canonical divisor of $Y_V.$
\end{proposition}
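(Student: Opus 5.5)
The plan is to compute the divisor of one explicit rational top form on $Y_V$. Since $Y_V$ is normal (\cite[Theorem 4.1]{crowley_proudfoot}), a canonical divisor can be read off on any open set whose complement has codimension at least $2$. By Proposition \ref{lem:stratification}, the strata of dimension $\geq r-1$ are the open stratum $Z_{V,E}=V$ and the strata $Z_{V,F}$ with $F\in\mathcal{H}(M_V)$. So it suffices to work on
\[U \;=\; V \,\cup \bigcup_{F\in\mathcal{H}(M_V)} Z_{V,F},\]
whose complement is a union of strata of dimension $\leq r-2$. The prime divisors of $Y_V$ meeting the boundary are exactly the $\overline{Z}_{V,F}$ with $F\in\mathcal{H}(M_V)$.

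Fix a basis $B$ of $M_V$. By the definition of the independent sets of $M_V$, the coordinates $x_b$ for $b\in B$ give an isomorphism $V\cong\mathbb{C}^r$. Take $\omega=\bigwedge_{b\in B}dx_b$. This form is regular and nowhere vanishing on $V$, so $\mathrm{div}(\omega)$ is supported on the boundary divisors. It remains to show that $\omega$ has a pole of order exactly $2$ along each $\overline{Z}_{V,F}$.

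Fix $F\in\mathcal{H}(M_V)$. I would choose a basis of $M_V|F$, call it $B_F$, and an element $e\notin F$. By Lemma \ref{lem:matroid_set_equality}, $B'=B_F\cup e$ is a basis of $M_V$. Changing basis multiplies $\omega$ by a nonzero constant, so we may assume $B=B'$.

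Every coordinate function on $V$ is a linear form in the $x_b$, $b\in B'$. For $g\in F$, the form $x_g$ involves only the $x_b$ with $b\in B_F$, because $g\in\cl(B_F)$. For $f\notin F$, we have $x_f=c_f x_e+\ell_f(x_{B_F})$ with $c_f\neq 0$, because $f\notin\cl(B_F)$.

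Now set $u=1/x_e$ and consider the map
\[\phi:\ \mathbb{C}^{r-1}\times\mathbb{C}\to(\mathbb{P}^1)^E,\qquad (x_{B_F},u)\mapsto\Big(x_g\Big)_{g\in F},\ \Big(\tfrac{1}{x_f}=\tfrac{u}{c_f+u\,\ell_f(x_{B_F})}\Big)_{f\notin F},\]
where the second group of entries is written in the coordinate $1/x_f$ at $\infty$. This map is defined near $u=0$. For $u\neq 0$ it parametrizes $V$. At $u=0$ it parametrizes $Z_{V,F}\cong\mathbb{C}^{r-1}$: all coordinates outside $F$ equal $\infty$, and the coordinates in $F$ are the linear forms on $\mathbb{C}^{B_F}$.

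Moreover $\phi$ is a locally closed immersion near $u=0$. Indeed, the coordinates $x_b$ ($b\in B_F$) and $1/x_e=u$ occur among its components, so it has a left inverse by projection. Hence $Y_V$ is smooth along $Z_{V,F}$, with local coordinates $(x_{B_F},u)$, and $u$ is a local equation for $\overline{Z}_{V,F}$ there.

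In these coordinates $dx_e=-u^{-2}\,du$, so
\[\omega=\pm\, u^{-2}\,du\wedge\bigwedge_{b\in B_F}dx_b .\]
Thus $\mathrm{ord}_{\overline{Z}_{V,F}}(\omega)=-2$. Summing over all $F$ gives $\mathrm{div}(\omega)=\sum_{F\in\mathcal{H}(M_V)}-2\,\overline{Z}_{V,F}$, which is the claim.

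The main obstacle is the local-chart step. One must justify carefully that $\phi$ is an isomorphism onto an open neighborhood, in $Y_V$, of a generic point of $Z_{V,F}$. This means showing that its image is open in $Y_V$ and not merely contained in it. I would deduce this from three facts: the image of $\phi$ is smooth, of dimension $r$, and contained in the irreducible variety $Y_V$ of dimension $r$.
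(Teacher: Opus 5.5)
Your proposal is correct, and it takes a different route from the paper, which gives no argument of its own: it imports the statement verbatim from \cite[Proposition 4.4]{crowley_proudfoot}. Your chart computation is therefore an independent, self-contained proof. It uses only four inputs: the stratification of Proposition~\ref{lem:stratification}; Lemma~\ref{lem:matroid_set_equality}; the linear-algebra meaning of closure in $M_V$ (namely, $g\in\cl(S)$ iff $x_g|_V$ lies in the span of $\{x_s|_V\}_{s\in S}$ in $V^*$); and normality of $Y_V$, which the paper also imports and which is needed anyway to speak of a canonical Weil divisor. What the citation buys is brevity. What your argument buys is transparency: the coefficient $-2$ is visibly the double pole of $dx_e$ at $x_e=\infty$. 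As a by-product you also get that $Y_V$ is smooth on an explicit neighbourhood of each codimension-one stratum, in coordinates where $\overline{Z}_{V,F}=\{u=0\}$.

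The step you flag as the main obstacle closes quickly, as follows.
\begin{enumerate}
\item Let $W\subseteq\mathbb{C}^{r-1}\times\mathbb{C}$ be the open set where all $c_f+u\,\ell_f\neq 0$. Let $p$ be the projection to the coordinates $\bigl((x_b)_{b\in B_F},\,1/x_e\bigr)$, defined on the affine chart where the coordinates indexed by $F$ are finite and those outside $F$ are nonzero.
\item Then $\phi$ is a section of the separated morphism $p$ over $O=p^{-1}(W)$, hence a closed immersion into $O$.
\item Since $\phi(W\cap\{u\neq0\})\subseteq V$ is dense in $\phi(W)$, we get $\phi(W)\subseteq Y_V\cap O$.
\item Both sets are closed, irreducible and $r$-dimensional in $O$, so they coincide. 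Thus $\phi(W)$ is an open subset of $Y_V$ containing all of $Z_{V,F}$, not just its generic point. Smoothness of the image is not needed for this step.
\item Moreover $\phi(W)\cap\overline{Z}_{V,F}=\phi(\{u=0\})$. Points with $u\neq 0$ lie in $V$, and every stratum $Z_{V,G}$ with $G\subsetneq F$ has an infinite coordinate indexed by $F$. So $u$ really is a local equation for $\overline{Z}_{V,F}$.
\end{enumerate}

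One small wording fix: the support of $\mathrm{div}(\omega)$ consists of the prime divisors \emph{contained in} $Y_V\setminus V$, not those merely meeting it.
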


\begin{proof}[Proof of Theorem \ref{thm:gorenstein_condition}]

The algebraic variety $Y_V$ is Gorenstein if and only if the Weil divisor in Proposition \ref{prop:canonical_divisor} is Cartier. It is equivalent to show that the Weil divisor class 
\[\sum_{F \in \mathcal{H}(M_V)}-2 \, [\overline{Z}_{V,F}]\]
is an element of the subgroup $\CaCl(Y_V) \subseteq \Cl(Y_V)$, because the canonical divisor of $Y_V$ is defined up to linear equivalence. By Proposition \ref{prop:cartier_class_group}, this occurs if and only if there exists a set function $w: E \to \mathbb{Z}$ such that
\begin{equation}
\label{eq:gorenstein_1}
\sum_{e\in E} \left( w(e) \sum_{F \in \mathcal{H}_e(M_V)} [\overline{Z}_{V,F} ] \right)= \sum_{F \in \mathcal{H}(M_V)} -2 \,[\overline{Z}_{V,F}].
\end{equation}
After bringing the constant $w(e)$ inside the second sum, the left-hand side of Equation (\ref{eq:gorenstein_1}) becomes
\begin{equation}
\label{eq:gorenstein_2}
\sum_{e\in E}\sum_{F \in \mathcal{H}_e(M_V)} w(e)[\overline{Z}_{V,F} ].
\end{equation}
Furthermore, we can reindex the double sum in Expression (\ref{eq:gorenstein_2}) to get
\begin{equation}
\label{eq:gorenstein_3}
\sum_{F \in \mathcal{H}(M_V)} \sum_{e \in E \setminus F} w(e)[\overline{Z}_{V,F}].
\end{equation}
Then, Expression (\ref{eq:gorenstein_3}) equals the right-hand side of Equation (\ref{eq:gorenstein_1}) if and only if, for each $F \in \mathcal{H}(M_V)$,
\[
    \sum_{e \in E \setminus F} w(e) = -2.
\]
The complements of the corank 1 flats of $M_V$ are precisely the cocircuits of $M_V$, so we get the combinatorial condition stated in Theorem \ref{thm:gorenstein_condition} for when $Y_V$ is Gorenstein. A near-identical proof yields the condition for when $Y_V$ is $\mathbb{Q}$-Gorenstein.
\end{proof}

\appendix
\section{Proof of Lemma \ref{lem:chow_ring_computations}}
\begin{proof}[Proof of Lemma \ref{lem:chow_ring_computations}, (I)]
It follows from the definition of a nearly maximal compatible pair that $F_{s+1}$ must be a rank-$(s+1)$ flat, $F_{s+2}$ must be a rank-$(s+2)$ flat, etc. 

Choose $a,b \in E$ such that $a \in F_{s+1} \setminus \cl(I)$ and $b \in F_{s+2} \setminus F_{s+1}.$ (If $r= s+2,$ hence there is no $F_{s+2}$, then let $F_{s+2} = E.$) Due to the relations (\ref{eq:ideal_description_1}) in the Chow ring $A^*(W_V)$, we have the following equality:
\begin{equation}
\label{eq:proof_of_i_1}
y_b - y_a = \sum_{F \not\ni b}x_F - \sum_{F \not\ni a}x_F.
\end{equation}
If a proper flat $F$ contains neither $a$ nor $b$, then the element $x_F$ appears in both of the above sums. Therefore, we can rewrite Equation (\ref{eq:proof_of_i_1}) as
\begin{equation}
\label{eq:proof_of_i_2}
y_b - y_a = \sum_{F \ni a,\: F \not\ni b}x_F - \sum_{F \not\ni a,\: F \ni b}x_F.
\end{equation}
The flat $F_{s+1}$ does contain $a,$ but does not contain $b,$ so we can rewrite Equation (\ref{eq:proof_of_i_2}) as
\begin{equation*}
\label{eq:proof_of_i_3}
y_b - y_a  = \sum_{F \in S}x_F  + x_{F_{s+1}}- \sum_{F \not\ni a,\: F \ni b}x_F,
\end{equation*}
where $S$ is the collection of proper flats that contain $a$, do not contain $b,$ and are not equal to $F_{s+1}$.
Thus, we can write
\begin{equation}
\label{eq:proof_of_i_4}
x_{F_{s+1}} = y_b - y_a  - \sum_{F \in S}x_F  + \sum_{F \not\ni a,\: F \ni b}x_F.
\end{equation}
We multiply both sides of Equation (\ref{eq:proof_of_i_4}) by $x_{I, \mathscr{F}}$ and distribute to get
\begin{equation}
\label{eq:proof_of_i_5}
\begin{split}
& x_{F_{s+1}} \cdot x_{I , \mathscr{F}} = \\ & y_b \cdot x_{I , \mathscr{F}} - y_a \cdot x_{I , \mathscr{F}}  - \sum_{F \in S}x_F \cdot x_{I , \mathscr{F}}  + \sum_{F \not\ni a,\: F \ni b}x_F \cdot x_{I , \mathscr{F}}.
\end{split}
\end{equation}
We now deal with each term on the right-hand side of Equation (\ref{eq:proof_of_i_5}) one at a time. 

\noindent
\textit{First term.} The element $b\in E,$ by construction, is not a member of $F_{s+1}$. Thus, by the set of relations (\ref{eq:ideal_description_3}) in the ring $A^*(W_V)$, we have
\begin{equation*}
y_b \cdot x_{I, \mathscr{F}} = 0.
\end{equation*}

\noindent
\textit{Second term.} Let $I' = I \, \cup \, a$. Since $a \notin \cl(I)$, we have that $I'$ must be an independent set. Furthermore, since $I \subseteq F_{s+1}$ and $a \in F_{s+1}$, we have that $I' = I \cup a \subseteq F_{s+1}$. We conclude that $(I', \mathscr{F})$ is a maximal compatible pair of $M_V$. We have
\begin{equation*}
y_a \cdot x_{I, \mathscr{F}} = y_{i_1}y_{i_2}\cdots y_{i_s} y_{a} x_{F_{s+1}}x_{F_{s+2}}\cdots x_{F_{s+t}} = x_{I',\mathscr{F}}. 
\end{equation*}

\noindent
\textit{Third term.} Suppose $F$ is a member of the collection $S$. We will show that \[x_F \cdot x_{I , \mathscr{F}} = 0.\]
We will do cases based on the rank of $F$.
\begin{itemize}

\item If $\rk(F) < s$, then $F$ does not contain every element in $I$. So, by the relations (\ref{eq:ideal_description_3}), we are done.

\item If $\rk(F) = s$, then $F$ still doesn't contain every element in $I$. (If $F$ did, then $F = \cl(I)$. But $a\in F$ and $a \notin \cl(I)$.) So, by the relations (\ref{eq:ideal_description_3}), we are done.

\item If $\rk(F) = s+1$, we have that $F$ and $F_{s+1}$ are incomparable flats, as they are of the same rank but not equal. So, by the relations (\ref{eq:ideal_description_2}), we are done.

\item If $\rk(F) > s+1$, then there is a $F' \in \mathscr{F}$ with the same rank. However, they are not equal, as $b\in F_{s+2} \subseteq F'$, but $b \notin F$. So, $F$ and $F'$ are incomparable flats, and by the relations (\ref{eq:ideal_description_2}), we are done.
\end{itemize}
So we have
\begin{equation*}
\sum_{F \in S}x_F \cdot x_{I,\mathscr{F}} = 0.
\end{equation*}

\noindent
\textit{Fourth term.} Let $F$ be a proper flat that doesn't contain $a$ and contains $b$. We will show that $x_F \cdot x_{I, \mathscr{F}} = 0$. We do this by showing that $F$ and $F_{s+1}$ are incomparable flats. This can be seen by observing that $a \in F_{s+1} \setminus F$ and $b \in F \setminus F_{s+1}$. So, by the relations (\ref{eq:ideal_description_2}), we have
\begin{equation*}
\sum_{F \not\ni a,\: F \ni b}x_F \cdot x_{I,\mathscr{F}} = 0.
\end{equation*}

Putting the four terms together, we have
\begin{equation*}
x_{F_{s+1}} \cdot x_{I ,\mathscr{F}} = -x_{I' , \mathscr{F}}.
\end{equation*}
But $\deg_{W_V}(x_{I' , \mathscr{F}} \cap [W_V]) = 1,$ so we are done.
\end{proof}
\begin{proof}[Proof of Lemma \ref{lem:chow_ring_computations}, (II)]  
We follow a similar argument. Choose $a,b \in E$ such that $a \in F_{s+k} \setminus F_{s+k-1}$ and $b \in F_{s+k+1} \setminus F_{s+k}.$ (If $r=s+ k+1,$ hence there is no $F_{s+k+1}$, then let $F_{s+k+1} = E.$) Following a near-identical argument to the one above,
we get 
\begin{equation}
\label{eq:proof_of_ii_1}
\begin{split}
& x_{F_{s+k}} \cdot x_{I , \mathscr{F}} = \\ & y_b \cdot x_{I , \mathscr{F}} - y_a \cdot x_{I , \mathscr{F}}  - \sum_{F \in S}x_F \cdot x_{I , \mathscr{F}}  + \sum_{F \not\ni a,\: F \ni b}x_F \cdot x_{I , \mathscr{F}},
\end{split}
\end{equation}
where $S$ is the collection of proper flats that contain $a,$ do not contain $b,$ and are not equal to $F_{s+k}.$ We deal with each term on the right-hand side of Equation (\ref{eq:proof_of_ii_1}) one at a time.

\noindent
\textit{First term.} The element $b\in E,$ by construction, is not a member of $F_{s+k}$. Thus, by the set of relations (\ref{eq:ideal_description_3}) in the ring $A^*(W_V)$, we have
\begin{equation*}
y_b \cdot x_{I, \mathscr{F}} = 0.
\end{equation*}

\noindent
\textit{Second term.} The element $a\in E,$ by construction, is not a member of $F_{s+k-1}$. Thus, by the relations (\ref{eq:ideal_description_3}), we have
\begin{equation*}
y_a \cdot x_{I, \mathscr{F}} = 0. 
\end{equation*}

\noindent
\textit{Third term.} Suppose $F$ is a member of the collection $S$. We will show that $x_F \cdot x_{I , \mathscr{F}} = 0$. We will do cases based on the rank of $F$.
\begin{itemize}

\item If $\rk(F) < s$, then $F$ does not contain every element in $I$. So, by the relations (\ref{eq:ideal_description_3}), we are done.

\item If $\rk(F) = s$, then $F$ still doesn't contain every element in $I$. (If $F$ did, then $F = \cl(I)$. But $a\in F$ and $a \notin \cl(I)$.) So, by the relations (\ref{eq:ideal_description_3}), we are done.

\item If $s < \rk(F) < s+k,$ then there is a $F' \in \mathscr{F}$ with the same rank. However, they are not equal, as $a\in F,$ but $a \notin F'.$ So, $F$ and $F'$ are incomparable flats, and by the relations (\ref{eq:ideal_description_2}), we are done.

\item If $\rk(F) = s+k$, we have that $F$ and $F_{s+k}$ are incomparable flats, as they are of the same rank but not equal. So, by the relations (\ref{eq:ideal_description_2}), we are done.

\item If $\rk(F) > s+k$, then there is a $F' \in \mathscr{F}$ with the same rank. However, they are not equal, as $b\in F'$, but $b \notin F$. So, $F$ and $F'$ are incomparable flats, and by the relations (\ref{eq:ideal_description_2}), we are done.
\end{itemize}
So we have
\begin{equation*}
\sum_{F \in S}x_F \cdot x_{I,\mathscr{F}} = 0.
\end{equation*}

\noindent
\textit{Fourth term.} Let $F$ be a proper flat that doesn't contain $a$ and contains $b$. We will show that $x_F \cdot x_{I, \mathscr{F}} = 0$. We do this by showing that $F$ and $F_{s+k}$ are incomparable flats. This can be seen by observing that $a \in F_{s+k} \setminus F$ and $b \in F \setminus F_{s+k}$. So, by the relations (\ref{eq:ideal_description_2}), we have
\begin{equation*}
\sum_{F \not\ni a,\: F \ni b}x_F \cdot x_{I,\mathscr{F}} = 0.
\end{equation*}

Putting everything together, we have
\begin{equation*}
x_{F_{s+k}} \cdot x_{I ,\mathscr{F}} = 0,
\end{equation*}
so we are done.
\end{proof}
\begin{proof}[Proof of Lemma \ref{lem:chow_ring_computations}, (III)]
We start with 
\begin{equation}
\label{eq:proof_of_iii_1}
    \alpha \cdot x_{I, \mathscr{F}} = \left( \sum_{F \in \mathcal{L}_{\neq r} (M_V)} a(F)x_F \right) \cdot x_{I, \mathscr{F}} = \sum_{F \in \mathcal{L}_{\neq r} (M_V)} \left( a(F) x_F \cdot x_{I, \mathscr{F}} \right).
\end{equation}
We partition $\mathcal{L}_{\neq r}(M_V)$ into four sets:
\[\mathcal{L}_{\neq r}(M_V) = P_1 \sqcup P_2 \sqcup P_3 \sqcup P_4,\]
where
\begin{itemize}
    \item $P_1 = \mathscr{F}$
    \item $P_2 = \{\cl(I)\}$
    \item $P_3 = \{ F\in \mathcal{L}_{\neq r}(M_V)  : I \not \subseteq F\}$
    \item $P_4 = \{F\in \mathcal{L}_{\neq r}(M_V): I \subseteq F, F\notin \mathscr{F}, \rk(F) > s\}.$
\end{itemize}
It is straightforward to check that the sets are disjoint from each other. Here we justify why an arbitrary proper flat $F$ falls into one of these four sets. If $\rk(F) < s,$ then $I \not \subseteq F$, so $F \in P_3$. If $\rk(F) = s,$ then either $I \subseteq F$, in which case $F = \cl(I)$, or $I\not \subseteq F$, in which case $F\in P_3$. Lastly, if $\rk(F) > s$, then clearly $F$ is either in $P_1,$ $P_3,$ or $P_4$.
Using this partition, the right-hand side of Equation (\ref{eq:proof_of_iii_1}) becomes
\begin{equation}
\label{eq:proof_of_iii_2}
\begin{split}
& \sum_{F \in \mathscr{F}}a(F)x_F \cdot x_{I, \mathscr{F}} + a(\cl(I))x_{\cl(I)} \cdot  x_{I,\mathscr{F}} + \\ & \sum_{F \in P_3}a(F)x_F \cdot  x_{I, \mathscr{F}} + \sum_{F \in P_4} a(F)x_F \cdot x_{I, \mathscr{F}}.
\end{split}
\end{equation}
We deal with each term in Equation (\ref{eq:proof_of_iii_2}), one at a time.

\noindent
\textit{First term.} By Lemma \ref{lem:chow_ring_computations}, parts (I) and (II), we have that
\begin{equation}
\begin{split}
& \deg_{W_V}\left(\ \Big( \sum_{F \in \mathscr{F}}a(F)x_F \cdot x_{I, \mathscr{F}} \Big) \cap [W_V] \right) = \\ & \sum_{F \in \mathscr{F}} \Big( a(F) \deg_{W_V}\big((x_F \cdot x_{I, \mathscr{F}}) \cap [W_V]\big) \Big) = -a(F_{s+1}).
\end{split}
\end{equation}

\noindent
\textit{Second term.}
Let $\mathscr{F}'$ be the flag
\[\mathscr{F}':\cl(I) \subsetneq  F_{s+1} \subsetneq F_{s+2} \subsetneq \cdots \subsetneq F_{s+t}.\]
We observe that $(I, \mathscr{F}')$ is a maximal compatible pair. We have
\[x_{\cl(I)} \cdot x_{I, \mathscr{F}} = y_{i_1}y_{i_2}\cdots y_{i_s} x_{\cl(I)} x_{F_{s+1}}x_{F_{s+2}}\cdots x_{F_{s+t}} = x_{I,\mathscr{F}'}.\]
So,
\begin{equation}
\begin{split}
& \deg_{W_V}\Big((a(\cl(I))x_{\cl(I)} \cdot x_{I, \mathscr{F}}) \cap [W_V] \Big) = \\ & a(\cl(I)) \deg_{W_V}(x_{I, \mathscr{F}'} \cap [W_V]) = a(\cl(I)).
\end{split}
\end{equation}

\noindent
\textit{Third term.} By the relations (\ref{eq:ideal_description_3}) in $A^*(W_V)$, the third term is 0.

\noindent
\textit{Fourth term.} Let $F$ be an arbitrary proper flat in $P_4.$ Since $\rk(F) > s,$ there exists a flat $F' \in \mathscr{F}$ with the same rank as $F$. However, they cannot be the same flat; hence $F$ and $F'$ are incomparable. Thus, by the relations (\ref{eq:ideal_description_2}), the fourth term is 0.

Putting everything together, we see that we are done.
\end{proof}
\medskip
\printbibliography

@article{huh_wang,
    author = "June Huh and Botong Wang" ,
    title = "Enumeration of points, lines, planes, etc.",
    year = "2017",    
    journal = "Acta Mathematica",
    volume = "218",
    number = "2",
    pages = "297--317",
}

@article{stellahedral,
    author = "Christopher Eur and June Huh and Matt Larson",
    title = "Stellahedral geometry of matroids",
    year = "2023",
    journal = "Forum of Mathematics, Pi",
    volume = "11",
    pages = "e24",
}

@article{crowley_proudfoot,
author = "Colin Crowley and Nicholas Proudfoot",
title = "The geometry of zonotopal algebras II: Orlik–Terao algebras and Schubert varieties",
year = "2026",
journal = "Proceedings of the London Mathematical Society",
volume = "132",
number = "6",
pages = "e70168",
}

@inproceedings{llompart_descamps,
    author="F. Rossell{\'o} Llompart and S. Xamb{\'o} Descamps",
    title="Computing chow groups",
    year="1988",
    booktitle="Algebraic Geometry Sundance 1986",
    publisher="Springer Berlin Heidelberg",    
    pages="220--234",
}

@article{danilov,
    author = "V. I. Danilov",
     title = "The geometry of toric varieties",
     year = "1978",
   journal = "Russian Mathematical Surveys",
    volume = "33",
    number = "2",
     pages = "97--154",
}

@article{semismall,
author = "Tom Braden and June Huh and Jacob P. Matherne and Nicholas Proudfoot and Botong Wang",
title = "A semi-small decomposition of the Chow ring of a matroid",
year = "2022",
journal = "Advances in Mathematics",
volume = "409",
pages = "108646",
}

@article{woo_yong,
author = "Alexander Woo and Alexander Yong",
title = "When is a Schubert variety Gorenstein?",
year = "2006",
journal = "Advances in Mathematics",
volume = "207",
number = "1",
pages = "205--220",
}

@article{ardila_boocher,
    author="Federico Ardila and Adam Boocher",
      title="The closure of a linear space in a product of lines",      
      year="2015",
      journal = "Journal of Algebraic Combinatorics",
      volume = "43",
      pages = "199--235",     
}

@article{proudfoot_2018, 
author="Nicholas Proudfoot",
title="The algebraic geometry of Kazhdan–Lusztig–Stanley polynomials",
year="2018", 
journal="EMS Surveys in Mathematical Sciences",
volume="5",  
pages={99--127},
}

@article{fulton_sturmfels,
author = "William Fulton and Bernd Sturmfels",
title = "Intersection theory on toric varieties",
year = "1997",
journal = "Topology",
volume = "36",
number = "2",
pages = "335-353",
}

@article{operational_chow_cohomology,
    author    = "William Fulton and Robert MacPherson",
    title     = "Categorical framework for the study of singular spaces",
    year      = "1981",
    journal   = "Memoirs of the American Mathematical Society",
    volume   = "31",
    number   = "243",
}

@article{singular_hodge_theory,
       author="Tom Braden and June Huh and Jacob P. Matherne and Nicholas Proudfoot and Botong Wang",
        title = "Singular Hodge theory for combinatorial geometries",
        year = 2026,
      journal = "arXiv e-prints",    
        pages = "arXiv:2010.06088",          
}

@book{intersection_theory,
author="William Fulton",
title="Intersection Theory",
year="1998",
publisher="Springer New York",
}

@phdthesis{singular_thesis,
	author		= "Yiyu Wang",
	title		= "Microlocal multiplicities of matroid Schubert varieties",
    year		= "2025",
	school		= "University of Wisconsin-Madison",
}
\end{document}